\xpatchcmd{\paragraph}{\normalfont}{{\normalfont\bfseries}}{}{}
\newcommand{\myspace}{\qquad\qquad\qquad}
\newcommand{\cA}{{\mathcal A}}
\newcommand{\cD}{{\mathcal D}}
\newcommand{\cF}{{\mathcal F}} 
\newcommand{\cH}{{\mathcal H}}
\newcommand{\cK}{{\mathcal K}}
\newcommand{\cL}{{\mathcal L}}
\newtheorem{theorem}{Theorem}[section]
\newtheorem{proposition}[theorem]{Proposition}
\newtheorem{remark}[theorem]{Remark}
\newtheorem{remarks}[theorem]{Remarks}
\numberwithin{equation}{section}
\newcommand{\R}{\mathbb{R}}
\date{}
\begin{document}

\title[The Cauchy-Dirichlet problem for the MGT equation]{The Cauchy-Dirichlet problem for the Moore-Gibson-Thompson equation}

\author{Francesca Bucci}
\address{Francesca Bucci, Universit\`a degli Studi di Firenze,
Dipartimento di Matematica e Informatica,
Via S.~Marta 3, 50139 Firenze, ITALY
}
\email{francesca.bucci@unifi.it}

\author{Matthias Eller}
\address{Matthias Eller, Georgetown University, 
Department of Mathematics and Statistics,
Georgetown~360, 37th and O Streets NW, Washington DC 20057, USA
}
\email{Matthias.Eller@georgetown.edu}

\subjclass[2010]{35B65, 35L35, 35L50, 35R09}

\keywords{Moore-Gibson-Thompson equation, hyperbolic mixed problem, boundary regularity, cosine operators, Volterra equations}

\begin{abstract}
The Cauchy-Dirichlet problem for the Moore-Gibson-Thompson equation is analyzed. 
With the focus on non-homogeneous boundary data, two approaches are offered: one is based on the theory of hyperbolic equations, while the other one uses the theory of operator semigroups. 
This is a mixed hyperbolic problem with a characteristic spatial boundary. 
Hence, the regularity results exhibit some deficiencies when compared with the non-characteristic case.
%
\end{abstract}

\maketitle


\section{Introduction}
The Moore-Gibson-Thompson (MGT) equation
\begin{equation}\label{e:mgt}
w_{ttt} + \alpha w_{tt} - c^2\Delta w -b \Delta w_t =f
\end{equation}
is a hyperbolic partial differential equation with respect to the $t$ variable, of order three for all $b>0$.
This can be seen as follows: its principal symbol is 
$-{\rm i}\xi_0^3 + b{\rm i} \xi_0 |\xi|^2$, and hence, the equation is even strictly hyperbolic, since its characteristic roots $0,\sqrt{b}|\xi|,-\sqrt{b}|\xi|$ are distinct for all $\xi \in \R^d \setminus \{0\}$. The other terms are of lower order.
Since the principal part is strictly hyperbolic, hyperbolicity is preserved regardless of
lower order terms; see \cite[Corollary~12.4.10]{hormander83}. 
In this article we discuss the Cauchy-Dirichlet problem for the equation \eqref{e:mgt}, that means the initial-boundary value problem (IBVP)
\begin{equation*}
Mw =f \;\; \text{in $Q$,} \quad w=g \;\; \text{in $\Sigma$,} \quad 
w(0)=w_0\,, \; w_t(0)=w_1\,, \; w_{tt}(0)=w_2 \;  \text{in $\Omega$.}
\end{equation*}
having set 
\begin{equation}\label{e:differential-operator}
Mw := w_{ttt} + \alpha w_{tt} - c^2\Delta w -b \Delta w_t\,,
\end{equation}
and where $\Omega$ is a (non-empty) bounded, open and connected subset of $\R^d$ with a smooth
boundary $\Gamma$.
The set $Q=(0,T) \times \Omega$ is the time-space cylinder with lateral boundary 
$\Sigma = (0,T) \times \Gamma$ where $T>0$. 
More specifically, we are interested in the well-posedness of this Cauchy-Dirichlet problem in a suitable Sobolev space. 

While general results for the well-posedness of hyperbolic initial-boundary value problems are available, none of them can be just quoted here for an easy answer.
The problem with the MGT equation is that the spatial boundary $\Gamma$ is characteristic. Note that this third order equation does not contain a derivative of order three in normal direction, regardless of the geometry of the boundary.
Hence, the analysis by Sakamoto \cite{sakamoto82} cannot be used to study the
present IBVP.
Only H\"ormander's discussion of the mixed hyperbolic problem \cite[Section~12.9]{hormander83} allows for characteristic boundaries. But his approach is limited to the constant coefficient problem in the half space. A $C^\infty$-theory is presented, hence there are no estimates.

We explore two distinct approaches to the regularity analysis of the problem under consideration. 
One uses the theory of hyperbolic equations -- a proper perspective not embraced so far 
in the literature on the MGT equation -- and makes necessary adjustments to Sakamoto's approach because of the characteristic boundary.

The other one takes instead the perspective of Pandolfi and the first named author
which relates the MGT equation to a suitable wave equation with memory
\cite{bucci-pan-jee_2019}.
The latter approach then appeals to the hyperbolic regularity theory for linear wave equations with Dirichlet boundary conditions which is largely a consequence of Sakamoto's work on scalar strictly hyperbolic equations \cite{sakamoto_70,sakamoto82}.
An approach via first order system was given in the book by Chazarain and Piriou \cite[Example 7.3.10]{chaz-piriou_81}. 
A closer major reference for the present analysis is the work by Lasiecka, Lions and Triggiani, where energy methods and semigroup theory intertwine \cite{las-lions-trig_1986}.
The relevance of the theory of cosine operators for the regularity analysis of
second-order equations proves its strength also in the study of the third-order equation under examination.  

The two approaches combine to bring about the final conclusion, in the form of the interior and boundary regularity results stated as Theorem~\ref{t:main} below. It is interesting to note the differences between the two approaches. The approach based on the theory of semigroups requires a little more regularity of the Dirichlet boundary data to obtain optimal results concerning interior regularity. On the other hand, the hyperbolic approach assumes less regularity of the boundary data and produces less interior regularity. Both arguments lead to a trace regularity result.

\smallskip
The structure of the paper is readily outlined.
In the next subsection we present the full statement of the main results.
We leave a review of pertinent literature on the MGT equation  to Subsection~\ref{sub:literature}, along with the a brief mention of the physical considerations which brought about the (quasilinear) Jordan-Moore-Gibson-Thompson equation, and thus its linearization. 
This subsection is intended to provide a context for the MGT equation, along with an updated list of references. Its reading can be postponed, if one aims at focusing on the core of the present study.
Section~\ref{s:waves-w-memory} and Section~\ref{s:hyperbolic-systems}
contain the distinct analyses, that eventually culminate in the proof
of our main result, in accordance with the aforesaid distinct approaches.


\subsection{Main result} \label{sub:statements}
Consider the Cauchy-Dirichlet problem for the MGT equation, 
that we rewrite here for the reader's convenience:
\begin{equation}\label{e:ibvp-mgt_1}
\begin{cases}
w_{ttt}+\alpha w_{tt} -c^2 \Delta w-b \Delta w_t =f(t,x)
& \text{in $Q$}
\\[1mm]
w(0,\cdot)=w_0\,,\; w_t(0,\cdot)=w_1\,,\; w_{tt}(0,\cdot)=w_2
& \text{in $\Omega$}
\\[1mm]
w(t,x) =g(t,x) & \text{on $\Sigma$.}
\end{cases}
\end{equation}

\noindent
We briefly recall that the partial differential equation (PDE) referred to in the literature as the Moore-Gibson-Thompson equation -- in place of the longer 
Stokes-Moore-Gibson-Thompson-Jordan equation, which gives credit to various contributions during the decades, from Stokes \cite{stokes_1984} until Jordan \cite{jordan_2008,jordan_2014} -- is the linearization of a mathematical model of ultrasonic wave propagation, known as the Jordan-Moore-Gibson-Thompson equation; see the next subsection for an overview in a bit more detail.
The unknown $w=w(t,x)$, $(t,x)\in (0,T)\times \Omega$, 
represents the acoustic velocity potential or alternatively, the acoustic pressure
({\em cf.}~\cite{kalt-las-posp_2012} for a discussion on this issue).
The coefficients $c$, $b$, $\alpha$ are constant and positive; they represent the speed and diffusivity of sound ($c$, $b$), and a viscosity parameter ($\alpha$), respectively.
A relaxation parameter $\tau>0$ whose origin will appear clearer in 
Subsection~\ref{sub:literature} has been set equal to $1$, for simplicity of exposition.
The value 
\begin{equation}\label{e:gamma}
\gamma= \alpha - 
\frac{c^2}{b}
\end{equation}
will occur throughout, even though its property of being a
threshold for uniform stability will not play any role here;
see the former investigations of Kaltenbacher~et al.~\cite{kalten-etal_2011} and Marchand~et al.~\cite{marchand-etal_2012} (the latter providing a clarifying spectral analysis), as well as Dell'Oro and Pata \cite{delloro-pata_2017} (driven by the perspective of viscoelasticity).
Indeed, the first studies on (semigroup) well-posedness of the Cauchy-Dirichlet and Cauchy-Neumann problems associated with the MGT equation are carried out in \cite{kalten-etal_2011} and \cite{marchand-etal_2012}, in the case of homogeneous boundary conditions (i.e.~with $g\equiv 0$).
The key idea in both studies is the introduction of the auxiliary variable $z=w_t+c^2w/b$ and the equivalent coupled (PDE-ODE) system satisfied by $(z,w)$, where $z$ solves a second-order wave equation.

In the present work, focus is more specifically on the boundary-to-interior and interior-to-boundary regularity of the solutions, in a basic (and natural) functional setting. 
Accordingly, the regularity of the map
\begin{equation}
\big\{w_0,w_1,w_2,f,g\big\}\longrightarrow
\left\{w,w_t,w_{tt},\frac{\partial w}{\partial \nu},\frac{\partial^2 w}{\partial \nu^2}\right\}
\end{equation}
which associates to all data the interior solution (position, velocity, acceleration)
in $Q$, as well as normal derivatives of order one and two on $\Sigma$, will be the object of our investigation. 
Since the differential equation is of order three, the most natural setting from the viewpoint of hyperbolic PDE is to look for solutions $w\in H^2_{\rm loc}(Q)$ or 
$w\in C([0,T],H^2(Q))$. 

Our main result concerning the third order PDE under investigation, stated below, is similar to the interior and boundary regularity results that pertains to the Cauchy-Dirichlet problem for second order wave equations; 
see \cite{sakamoto82}, \cite[Theorem 3.4, Theorem 2.1]{las-lions-trig_1986}.

\begin{theorem} \label{t:main}
(a) With reference to the IBVP \eqref{e:ibvp-mgt_1} with $0<T<+\infty$, assume that
\begin{subequations}\label{e:hypo}
\begin{align}
& w_0\in H^2(\Omega)\,, \; w_1\in H^1(\Omega)\,, \; w_2\in L^2(\Omega)\,,
\label{e:hypo_1}
\\[1mm]
& f\in L^2(Q)=L^2(0,T;L^2(\Omega))\,,
\label{e:hypo_2}
\\[1mm]
& g\in C([0,T],H^{3/2}(\Gamma))\cap H^2(0,T;L^2(\Gamma))\,,
\label{e:hypo_3}
\\[1mm]
& g_t\in C([0,T],H^{1/2}(\Gamma))\,,
\label{e:hypo_4}
\end{align}
\end{subequations}
along with the compatibility conditions
\begin{equation}\label{e:cc}
w_0|_\Gamma= g|_{t=0}\in H^{3/2}(\Gamma)\,, \quad 
w_1|_\Gamma= g_t|_{t=0}\in H^{1/2}(\Gamma)\,.
\end{equation}
Then the unique solution to \eqref{e:ibvp-mgt_1} satisfies
\begin{equation}\label{e:interior-reg}
(w,w_t,w_{tt})\in C([0,T],H^2(\Omega)\times H^1(\Omega)\times L^2(\Omega))\,.
\end{equation}
If, in addition $g\in H^2(\Sigma)$, then we have the trace regularity result
\begin{equation}\label{e:boundary-reg}
 \frac{\partial^2 w}{\partial t\,\partial \nu} \in L^2(\Sigma)\,.
\end{equation}
 
\smallskip
\noindent
(b) Assume that $g$ satisfies -- in place of \eqref{e:hypo_3}-\eqref{e:hypo_4} -- the
weaker property 
\begin{equation}\label{e:stronger-hypo}
g\in H^1(\Sigma) \mbox{ and } g_t \in H^1(\Sigma)\,,
\end{equation}
with the regularity \eqref{e:hypo_2} of the affine term and 
\[
 w_0\in H^1(\Omega)\,,\; w_1\in H^1(\Omega)\,,\;w_2\in L^2(\Omega)
\]
(together with the compatibility conditions \eqref{e:cc}).
Then, the unique solution to \eqref{e:ibvp-mgt_1} satisfies
\[
 (w,w_t,w_{tt})\in C([0,T],H^1(\Omega)\times H^1(\Omega)\times L^2(\Omega))\,,
\] 
and the following trace regularity result holds true:
\begin{equation}\label{e:boundary-reg1}
\frac{\partial w}{\partial \nu}\,, \, \frac{\partial^2 w}{\partial t\partial \nu} \in L^2(\Sigma)\,.
\end{equation}
All quantities are continuous with respect to the data, consistently with the respective topologies.
\end{theorem}

\begin{remarks}[\sl Boundary regularity]
\begin{rm}
(i) It is important to emphasize that -- just like in the case of wave equations or other hyperbolic-like PDE -- the boundary regularity \eqref{e:boundary-reg} cannot be inferred on the only basis of the interior regularity 
$(w,w_t)\in C([0,T],H^2(\Omega)\times H^1(\Omega))$. 
Similarly, the regularity \eqref{e:boundary-reg1} does not follow directly from the interior regularity 
$(w,w_t)\in C([0,T],H^1(\Omega)\times H^1(\Omega) )$.
\\
(ii) The MGT equation does not yield the full trace regularity $\partial_\nu w \in H^1(\Sigma)$ and $\partial_\nu^2 w\in L^2(\Sigma)$ which would be expected from a 
third-order strictly hyperbolic PDE with non-characteristic boundary \cite{sakamoto82}. This is exactly because the boundary $\Sigma$ is characteristic. 
\\
(iii) While a trace regularity result for the second-order normal derivative cannot be obtained, we wonder whether $g\in H^2(\Sigma)$ implies $\partial w/\partial \nu \in H^1(\Sigma)$. 
\\
(iv) 
Instrumental to the study of an inverse problem, the regularity of boundary trace \eqref{e:boundary-reg} is shown in \cite{lecaros-etal_2020} for the special case of homogeneous boundary data.
The said result is contained in our Proposition 2.3 which pertains to the general non-homogeneous case, and {\sl a fortiori} in Theorem~\ref{t:main}, part~(a).
The respective proofs are distinct.
\end{rm}
\end{remarks}

\begin{remarks}[\sl Interior regularity]
\begin{rm}
In the case of homogeneous boundary data and affine term, the interior regularity of solutions \eqref{e:interior-reg} (in Theorem~\ref{t:main}, part (a)) 
is found already in the well-posedness result of \cite[Theorem~2.1]{marchand-etal_2012}, 
and also in the recent \cite[Theorem~5.3]{bucci-pan-jee_2019}.
Indeed, $g\equiv 0$ combined with the assumption \eqref{e:hypo_1} and the compatibility conditions \eqref{e:cc} yields $w_0\in H^2(\Omega)\cap H^1_0(\Omega)$ as well as 
$w_1\in H^1_0(\Omega)$, which gives 
$(w_0,w_1,w_2)\in [H^2(\Omega)\cap H^1_0(\Omega)]\times H^1_0(\Omega)\times L^2(\Omega)$, where the latter functional space is nothing but the space $U_3$ 
in the statement of the said Theorem~2.1 of \cite{marchand-etal_2012}.
And besides, the result is contained in the complex of regularity results summarized by the table~2 of \cite[Theorem~5.3]{bucci-pan-jee_2019}, if one takes in particular
$\lambda=2$, $\mu=1$, $\nu=0$.

As for part~(b) of Theorem~\ref{t:main}, we note that in the case $g\equiv 0$ it asserts in particular
the well-posedness of the IBVP for the MGT equation (with trivial boundary data) in the space 
$H^1_0(\Omega)\times H^1_0(\Omega)\times L^2(\Omega)$.
This result was originally proved in \cite{kalten-etal_2011}, with the significant specification of the group
property of the evolution.
It is also contained in the aforesaid 
\cite[Theorem~5.3]{bucci-pan-jee_2019}.
\end{rm}
\end{remarks}


\subsection{Background, literature review} \label{sub:literature}
The MGT equation arises in the context of a branch of physics and acoustics known as nonlinear acoustics (NLA), where one deals more specifically with sound waves of sufficiently large amplitudes.
The reader is referred e.g. to the monographs \cite{rudenko-soluyan-nla_1977} and
\cite{enflo-hedberg-nla_2006}, offering a predominant either physical or mathematical
treatment, respectively.
The review paper \cite{kalten-review_2015} provides an overview of established PDE models
of nonlinear sound propagation, as well as of more recent developements, 
along with a very useful collection of references.

Aiming to introduce a minimal mathematical background for the subject of the present investigation, we record explicitly two classical PDE models of NLA, namely, the Westervelt equation
\begin{equation*}
u_{tt}-c^2\Delta u - b \Delta u_t= \frac{\beta_a}{\rho c^2} (u^2)_{tt}
\qquad \text{in $(0,T)\times \Omega$}
\end{equation*}
(formulated in terms of the acoustic pressure $u$), 
and the Kuznetsov equation
\begin{equation*}
\psi_{tt}-c^2\Delta \psi - b \Delta \psi_t= 
\frac{\partial}{\partial t}\Big(\frac{\beta_a-1}{c^2}\psi_t^2+|\nabla \psi|^2\Big)
\qquad \text{in $(0,T)\times \Omega$}
\end{equation*}
(formulated in terms of the acoustic velocity potential $\psi$);
all constants that occurr in the equations are positive: 
$c, b$ are the speed and diffusivity of sound, $\rho>0$ is the mass density, $\beta_a>1$.   
%
The connections of the Westervelt and Kuznetsov (and Khokhlov-Zabolotskaya-Kuznetsov)
models with the Navier-Stokes and Euler compressible system is explored in the recent 
\cite{dekkers-rozanova_hal2019}.

A well-recognized issue which arises in the modeling of propagation of acoustic and thermal waves
is the {\em paradox of heat conduction}, namely, the incongruity 
between the infinite speed of propagation of a thermal disturbance with the principle of classical mechanics known as causality; see, e.g., \cite{jordan_2014} and its references.
Aiming to overcome this issue, the use of the (space-time) Maxwell-Cattaneo law
(\cite{cattaneo_1948,cattaneo_1958})
\begin{equation*}
\tau \dot{q} + q = -\kappa \nabla \theta
\end{equation*}
as constitutive relation for the heat flux $q$ (in place of the Fourier law, 
that corresponds to $\tau =0$, whilst above $\tau>0$)
has been proposed; such a choice eventually leads to the third-order PDE model
\begin{equation} \label{e:jmgt}
\tau \psi_{ttt} + \psi_{tt}-c^2\Delta \psi - b\Delta \psi_t=
\frac{\partial}{\partial t}\Big(\frac1{c^2}\frac{B}{2A}\psi^2_t+|\nabla \psi|^2\Big)
\end{equation}
($A, B$ are positive constants), known as the Jordan-Moore-Gibson-Thompson (JMGT) equation \cite{jordan_2008}.
The reader is referred to \cite{jordan_2014} for details on the derivation of 
the equation \eqref{e:jmgt}; see also \cite{kalten-review_2015}.

Notice that all three aforementioned equations are quasilinear PDE.
And yet, differently from the Westervelt and Kuznetsov equations,
the linearization of the JMGT equation -- i.e. the MGT equation -- is a strictly hyperbolic equation, as enlightened clearly in the Introduction; its mathematical analysis raises nontrivial issues, despite its being linear.
(Although perhaps unnecessary, we recall that the linearization of the Westervelt and Kuznetsov equations
-- {\sl viz.}~the strongly damped wave equation $u_{tt}-c^2\Delta u - b \Delta u_t= 0$ --
has a parabolic-like behaviour, as its dynamics is governed by an analytic semigroup.)

\smallskip
We recall as first the study of well-posedness and long-time behaviour for the JMGT equation (with time-dependent viscosity) carried out in \cite{kalt-las-posp_2012};
see also the recent \cite{racke-said_2019}.
Former contributions to the understanding of the analytical features of its linearization
are found in \cite{kalten-etal_2011} and \cite{marchand-etal_2012},
where major focus is placed on well-posedness of IBVP with homogenous (Dirichlet or Neumann) boundary data and on stability properties; in particular,
\cite{marchand-etal_2012} further provides a detailed spectral analysis.
While they establish well-posedness in more than one functional setting and show that the dynamics is governed by a strongly continuous {\em group}, these works disclose the crucial role of the parameters $b$ and $\gamma$ (defined by \eqref{e:gamma}) for well-posedness and uniform stability, respectively.
Indeed, in the case $b=0$ the associated initial-boundary value problems are ill-posed
\cite{kalten-etal_2011}.
Given $b>0$, then $\gamma$ must be positive, if one wants to ensure the property of {\em uniform} stability.
These findings have been revisited in \cite{delloro-pata_2017} within the history framework.

Most relevant to the present work is the study of the Cauchy-Dirichlet and Cauchy-Neumann problems with non-homogeneous boundary data performed in \cite{bucci-pan-jee_2019}, where a novel viewpoint and avenue of investigation is adopted. 
More precisely, the MGT equation is embedded in a family of wave equations with memory depending on a vectorial parameter.
This viewpoint enables the derivation of both interior and trace regularity results;
see \cite[Theorem~5.3]{bucci-pan-jee_2019} and \cite[Corollary~6.3]{bucci-pan-jee_2019}. 
The boundary data are assumed to be square integrable (in time and space) and provide a unique solution in a weaker topology, compared to our result.  

We recently learnt about the subsequent work 
\cite{trig_2020} that proves results about the same Cauchy-Dirichlet and Cauchy-Neumann problems, taking the original 
path of \cite{kalten-etal_2011} and \cite{marchand-etal_2012}.
Trace regularity results for both problems are obtained therein, in the case of
homogeneous initial data and forcing term, under square integrable boundary data which are subject to a continuity condition at $t=0$. 

\smallskip
Because high intensity focused ultrasound plays a central role in several medical procedures as well as industrial applications, optimal control problems arise naturally in the context of NLA.
A thorough overview of the literature on optimization problems associated with nonlinear PDE models for acoustic wave propagation is beyond the present work's scopes; {\sl cf.}~\cite{kalten-review_2015} and its references.
We limit ourselves to studies on the (MGT and) JMGT model. 
A functional-analytical framework and a solution to a minimization problem associated
with the MGT equation is provided in \cite{bucci-las_2019}, 
combining variational arguments with operator-theoretic techniques.
Specifically for the JMGT equation, we recall that \cite{nikolic-kalten_2017} deals with shape optimization; a sensitivity analysis with respect to the (relaxation) parameter $\tau>0$ has been carried out in \cite{kalten-nikolic_2019,kalten-nikolic_arxiv2019}.

We finally list a number of research works on the JMGT or the MGT equation, 
with a wealth of diverse focuses and goals.
These contributions concern:
the case $\gamma<0$, with an insight into the chaotic behaviour of the dynamics 
\cite{conejero-etal_2015},
inverse problems (\cite{liu-trig_2013,liu-trig_2014}, \cite{lecaros-etal_2020}),
long-time behaviour and attractors \cite{caixeta-etal_2016,caixeta-etal-attractors_2016},
explicit decay rates (\cite{pellicer-said_2019}, \cite{pellicer-sola_2019}),
null controllability for both the JMGT and MGT equations
\cite{lizama-etal_2019},
semilinear variants of the linear model and blow-ups \cite{chen-palmieri-dcds_2020}.
A variation of the original PDE model that displays an {\em additional} memory term
has been studied first in \cite{las-wang-part1_2016,las-wang-part2_2015}.
Questions that are explored and responded regard primarily well-posedness, the effect of the dissipation brought about by the memory term, decay rates.
Most recent articles include \cite{alves-etal_2018}, \cite{liu-etal_2019},
and \cite{delloro-las-pata_2016,delloro-las-pata_2019}.


\section{An approach exploiting the connection to wave equations with memory}
\label{s:waves-w-memory}
In this section we prove part (a) of Theorem \ref{t:main}.
Besides the interior regularity result we show a first boundary regularity result
(both stated therein); the latter is highlighted as Proposition~\ref{p:trace-partial}.
A sought-after improvement of the regularity of the boundary traces is briefly discussed in Remark~\ref{r:trace-partial}.

The basis of our proofs is the connection between the MGT equation and a suitable integro-differential equation devised in \cite{bucci-pan-jee_2019}.
The theory of Volterra equations (of the second kind) and the regularity theory
for second-order wave equations provide the tools.
Accordingly, for the reader's convenience we begin by considering the Cauchy-Dirichlet problem for second order wave equations: we recall the basic mathematical tools and notation, and the representation formula for its solutions that involves the cosine (and sine) operator, along with a few relevant regularity results.
The intermediate Proposition~\ref{p:volterra} establishes and details the connection between the MGT equation with the said Volterra equation, and constitutes an essential prerequisite for the understanding of the subsequent analysis. See also Remark~\ref{r:on-volterra}.

\subsection{Second order wave equations. Preliminaries}
Consider the following IBVP for a second order (linear) wave equation in the unknown
$z=z(t,x)$:
\begin{equation}\label{e:ibvp-wave}
\begin{cases}
z_{tt}=\Delta z +f & \text{in $Q$}
\\[1mm]
z(0,\cdot)=z_0\,, \; z_t(0,\cdot)=z_1  & \text{in $\Omega$}
\\[1mm]
z|_\Sigma=g\,.  &
\end{cases}
\end{equation}
Let $A$ be the realization of the Laplace operator in $L^2(\Omega)$, with Dirichlet
boundary conditions (BC); namely,
\begin{equation}\label{e:laplacian-realization}
A z:=\Delta z\,, \quad \cD(A)=H^2(\Omega)\cap H^1_0(\Omega)\,.
\end{equation}
It is well-known that the operator $A$, originally defined as in \eqref{e:laplacian-realization}, can be extended as 
$A\colon L^2(\Omega) \rightarrow [\cD(A^*)]'$.
Moreover, the fractional powers of $-A$ are well defined; {\sl cf.}~\cite[Vol.~II, \S~10.5.4]{las-trig-redbooks}
(paying attention to the fact that the present $A$ is denoted by $-\cA$ therein, whereas here $\cA$ is a different operator).
The Dirichlet (Green) map $D$ is defined as usual by
\begin{equation}\label{e:green-map}
D\colon L^2(\Gamma)\ni \varphi\longmapsto D\varphi=:\psi \;
\Longleftrightarrow \;
\begin{cases}
\Delta \psi =0 & \textrm{in $\Omega$}
\\[1mm]
\psi=\varphi & \textrm{on $\Gamma$}\,,
\end{cases}
\end{equation}
namely, $\psi=D \varphi$ is the harmonic extension of $\varphi$ from the boundary of
$\Omega$ into its interior.
Thus, the IBVP \eqref{e:ibvp-wave} corresponds to the abstract Cauchy problem
\begin{equation}\label{e:abstract-cauchy}
\begin{cases}
y'=\mathbb{A}y+\mathbb{B}g & \text{in $[\cD(\mathbb{A}^*)]'$}
\\[1mm]
y(0)=y_0
\end{cases}
\end{equation}
where we set $y(t)=(z(t),z_t(t))$, $y_0:=(z_0,z_1)$ and the linear operators $(\mathbb{A}$, $\mathbb{B})$ have the following explicit representation (in terms of $\cA$ and $D$), respectively:
\begin{equation*}
\mathbb{A}=\begin{pmatrix} 0 & I 
\\[1mm]
A & 0 \end{pmatrix}\,, \qquad
\mathbb{B}= \begin{pmatrix} 0 
\\[1mm]
-A D\end{pmatrix}\,;
\end{equation*}
in particular then, $\mathbb{B}\colon L^2(\Gamma) \longrightarrow [\cD(\mathbb{A}^*)]'$.
The operator $\mathbb{A}$ is the infinitesimal generator of a $C_0$-semigroup 
$e^{\mathbb{A}t}$, $t\ge 0$, e.g. on $Y=\cD((-A)^{1/2})\times L^2(\Omega)$.
The abstract differential formulation \eqref{e:abstract-cauchy} brings about the
following integral representation of the solution $y(t)$:
\begin{equation}\label{e:mild-sln}
y(t) = e^{\mathbb{A}t}y_0 + \int_0^t  e^{\mathbb{A}(t-s)}\mathbb{B}g(s)\,ds\,,
\end{equation}
which {\em a priori} makes sense at least on $[\cD(\mathbb{A}^*)]'$.

For cosine and sine operators we follow the notation 
adopted already in \cite[Section~2]{bucci-pan-jee_2019}.
Introduce the operator $\cA$ and the families of operators $R_+(\cdot)$, $R_{-}(\cdot)$
defined as follows:
\begin{equation}\label{e:cosine-sine}
\cA=i(-A)^{1/2}\,,\qquad 
R_+(t)=\frac{e^{\cA t}+e^{-\cA t}}{2}\,,
\qquad R_-(t)=\frac{e^{\cA t}-e^{-\cA t}}{2}\,.
\end{equation}
$R_+(t)$ is the strongly continuous {\em cosine} operator generated by $-A$ in
$L^2(\Omega)$ (\cite{sova_1966}, \cite{fattorini_1985}).
(See also \cite[Vol.~II, \S\,10.5.4]{las-trig-redbooks}, paying attention to the
distinct notations: indeed, even though the cosine operator $R_+(t)$ coincides with $C(t)$ therein, the operator denoted by $S(t)$ is actually the present $\cA^{-1}R_-(t)$.) 
We note that $\cA$ is the infinitesimal generator of a $C_0$-{\em group} of operators in $L^2(\Omega)$.
It is by now well-known that the semigroup $e^{\mathbb{A}t}$ admits the explicit representation
\begin{equation*}
e^{\mathbb{A}t}=
\begin{pmatrix}
R_+(t) & \cA^{-1} R_-(t)
\\[1mm]
-\cA R_-(t) & R_+(t)
\end{pmatrix}\,,
\end{equation*}
in terms of the cosine and sine operators, and that the solution to the IBVP 
\eqref{e:ibvp-wave} is given by
\begin{equation}\label{e:waves-explicit}
\begin{split}
z(t)& =R_+(t)z_0+\cA^{-1} R_-(t)z_1 +\cA^{-1} \int_0^t R_-(t-s)f(s)\,ds
\\[1mm]\
& \myspace -\cA\int_0^t R_-(t-s)D g(s)\,ds\,;
\end{split}
\end{equation}
see, e.g., \cite[Vol.~II, \S~10.5.4]{las-trig-redbooks}.

Since the regularity pertaining to the cosine and sine operators, specifically when
acting either on elements of the Banach space $L^2(\Omega)$ or (on elements) of the domain of the generator will be used repeatedly, we record here first of all that
\begin{equation}\label{e:semigroups}
\begin{split}
R_+(\cdot)x\,, \;R_-(\cdot)z\in C([0,T],L^2(\Omega)) \quad \text{with $x,z\in L^2(\Omega)$,}
\\[1mm]
R_+(\cdot)x\,, \;R_-(\cdot)z\in C([0,T],\cD(A)) \quad \text{with $x,z\in \cD(A)$\,.}
\end{split}
\end{equation}
Furthermore, a rewriting (in abstract form) of the nontrivial result
\begin{equation*}
\begin{cases}
(z_0, z_1)\in L^2(\Omega)\times H^{-1}(\Omega)\,, \; f\equiv 0
\\[1mm]
g\in L^2(\Sigma)=L^2(0,T;L^2(\Gamma)) 
\end{cases}
\Longrightarrow (z,z_t,z_{tt})
\in C([0,T],L^2(\Omega)\times H^{-1}(\Omega))\times H^{-2}(\Omega))
\end{equation*}
gives in particular
\begin{equation}\label{e:pre-crucial}
g\in L^2(\Sigma) 
\Longrightarrow 
\begin{pmatrix} 
\cA\int_0^\cdot R_-(\cdot-s) Dg(s)\,ds
\\[1mm]
\cA^2\int_0^\cdot R_+(\cdot-s) Dg(s)\,ds
\end{pmatrix}
\in C([0,T],L^2(\Omega)\times H^{-1}(\Omega))\,,
\end{equation}
(for $z_0=z_1\equiv 0$), which in turn tells us
\begin{equation} \label{e:crucial}
g\in L^2(\Sigma) 
\Longrightarrow 
\begin{pmatrix} 
\cA\int_0^\cdot R_-(\cdot-s) Dg(s)\,ds
\\[1mm]
\cA\int_0^\cdot R_+(\cdot-s) Dg(s)\,ds
\end{pmatrix}
\in C([0,T],L^2(\Omega)\times L^2(\Omega))\,,
\end{equation}
since $H^{-1}(\Omega)
\equiv [\cD(\cA)]'$.
The 
boundary-to-interior regularity results expressed by \eqref{e:crucial} will be used
quite often in the computations.
(We note that the result mentioned as first, below \eqref{e:semigroups},
has been shown to follow from a (sharp) boundary regularity estimate for the solution
to a dual problem.
Both aforesaid results are recorded in \cite{las-trig-redbooks} as Theorem~10.5.3.2 and Theorem~10.5.3.1, respectively.
Definitive proofs of these results are originally devised in the contemporary
\cite{lions-book_1983} and \cite{las-trig-hyperbolic_1983}; a roadmap for the subject
can be found in \cite[Section~10.5]{las-trig-redbooks} and in the relative Notes 
\cite[p.~1060]{las-trig-redbooks}.
We recall explicitly \cite{sakamoto_70} and \cite{lions-mag_72} as the former contributions to the regularity analysis of the IBVP \eqref{e:ibvp-wave}.)

\smallskip
It is useful to introduce the symbol $\cK$ to denote the linear operator defined by  
\begin{equation}\label{e:affine-convoluted}
\cK\colon f \longrightarrow (\cK f)(t):= \cA^{-1} \int_0^t R_-(t-s) f(s) \,ds\,.
\end{equation}
Specifically when $f(\cdot)=Dg(\cdot)$, one has
\begin{equation*}
(\cK Dg(\cdot))(t):= \cA^{-1} \int_0^t R_-(t-s) Dg(s) \,ds
=\cA^{-2} \Big[\cA\int_0^t R_-(t-s) Dg(s) \,ds\Big]\,,
\end{equation*}
which tells us -- in view of \eqref{e:crucial} -- that $\cK Dg\in \cD(\cA^2)\equiv \cD(A)$, if $g\in L^2(\Sigma)$. 


\subsection{Proof of Theorem~\ref{t:main}, part (a), and a first trace regularity result}
\label{sub:proof-franb}
The starting point of the present analysis is the connection between the MGT equation
and a wave equation with memory (and affine term depending also on initial data) first,
and a suitable Volterra equation of the second kind next, to which problem
\eqref{e:ibvp-mgt_1} can be reduced to, as shown in \cite{bucci-pan-jee_2019}.
The Proposition below embodies the statement of Proposition~3.6 in 
\cite{bucci-pan-jee_2019}, without neglecting the nontrivial forcing term $f$
in the original IBVP \eqref{e:ibvp-mgt_1}, and removing the translation of the differential operator $\Delta$ to $\Delta-I$, which is here unnecessary.
The symbol $\ast$ below denotes the usual convolution operation.


\begin{proposition} \label{p:volterra}
Any solution $w$ to the initial/boundary value problem \eqref{e:ibvp-mgt_1} is such that
$v=e^{\frac{\gamma}{2}t}w$ solves the following Volterra equation of the second kind:
\begin{equation} \label{e:volterra} 
v(t)+\displaystyle\int_0^t L(t-s)v(s)\,ds=H(t) \qquad \text{in $Q$}
\end{equation}
where $L(\cdot)$ is the strongly continuous kernel defined by
\begin{equation} \label{e:super-kernel}
L(t)v=-\frac{\beta}{\sqrt b} \cA^{-1}R_-(\sqrt b t) v
-\frac{1}{\sqrt b}\cA^{-1}\int_0^t R_-(\sqrt b(t-s)) K(s)v\,ds\,,
\end{equation}
and the affine term $H(\cdot)$ is given -- in terms of the initial and boundary data
-- by
\begin{equation} \label{e:affine}
\begin{split}
H(t)&=\Big[R_+(\sqrt{b}t)+\frac{\gamma}{2\sqrt{b}}\cA^{-1}R_-(\sqrt{b}t)\Big]w_0
+\frac{1}{\sqrt{b}}\cA^{-1} R_-(\sqrt{b}t) w_1+
\\[1mm]
& \quad +\frac{1}{\sqrt{b}} \cA^{-1}\int_0^t R_-(\sqrt{b}(t-s))
\big[h_0(t) w_0+h_1(t)w_1+h_2(t)(w_2-\Delta w_0)\big]\, ds-
\\[1mm]
& \quad +\frac{1}{\sqrt{b}} \cA^{-1}\int_0^t R_-(\sqrt{b}(t-s))\tilde{f}(s)\,ds
-\sqrt{b} \cA\int_0^t R_-(\sqrt{b}(t-s)) D\tilde{g}(s)\,ds\,.
\end{split}
\end{equation}
In the above formulas $\tilde{g}= e^{\frac{\gamma}{2}t}g$, while
the constant $\beta$ 
and the functions $K(\cdot)$, $h_i(\cdot)$, $i=0,1,2$, $\tilde{f}(\cdot)$ read 
explicitly as follow:
\begin{equation}\label{e:various-functions}
\begin{split}
& \beta=-\gamma\Big(\frac{3}{4}\gamma -\alpha\Big)\,, 
\qquad
K(t) = -\gamma (\gamma-\alpha)^2 e^{(\frac{3}{2}\gamma-\alpha)t}\,,
\\
& h_0(t)=-\gamma (\gamma-\alpha) e^{(\frac{3}{2}\gamma -\alpha)t}\,,
\quad 
h_1(t)=-\gamma e^{(\frac{3}{2}\gamma -\alpha)t}\,,
\quad
h_2(t)= e^{(\frac{3}{2}\gamma-\alpha)t}\,;
\\
& 
\tilde{f}(t)= e^{\frac{\gamma}{2}t}\Big(\lambda(t)
+ \gamma\big(e^{-\frac{c^2}{b}\cdot}\ast \lambda\big)(t)\Big)\,, 
\quad 
\lambda(t):=\int_0^t e^{-\alpha (t-s)}f(s)\,ds\,.
\end{split}
\end{equation}
In particular then, the initial and boundary data for $v$ are related to those of $w$ as follows:
\begin{equation}\label{e:data-for-v}
\begin{split}
& v|_{t=0}=:v_0\equiv w_0\,, \quad v_t|_{t=0}=:v_1=\frac{\gamma}{2}w_0+w_1\,;
\\[1mm]
& v|_\Sigma =\tilde{g}:=e^{\frac{\gamma}{2}t}g\,.
\end{split}
\end{equation}

\end{proposition}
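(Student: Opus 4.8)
The plan is to verify the claim by direct computation, built on the algebraic identity $w_{ttt}-b\Delta w_t=\partial_t(w_{tt}-b\Delta w)$, which isolates a genuine second-order wave operator inside the MGT operator of \eqref{e:mgt}. First I would explain why the exponential weight $e^{\frac{\gamma}{2}t}$ is the correct one. Writing $w=e^{-\frac{\gamma}{2}t}v$ and multiplying the equation through by $e^{\frac{\gamma}{2}t}$, the transformed equation takes the form
\[
v_{ttt}-b\Delta v_t+(\alpha-\tfrac{3}{2}\gamma)v_{tt}+(\tfrac{\gamma}{2}b-c^2)\Delta v+(\text{terms in }v_t,v)=e^{\frac{\gamma}{2}t}f .
\]
Here the coefficients of $v_{tt}$ and of $\Delta v$ are $\alpha-\frac32\gamma$ and $\frac{\gamma}{2}b-c^2$; using $c^2=b(\alpha-\gamma)$ from \eqref{e:gamma} one checks that the latter equals $-b$ times the former, so that the two terms combine into $-(\tfrac32\gamma-\alpha)(v_{tt}-b\Delta v)$. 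It is precisely this matching that pins down the weight $\frac{\gamma}{2}$. Setting $u:=v_{tt}-b\Delta v$, the third-order PDE thereby collapses to the first-order (in $t$) equation $u_t-(\tfrac32\gamma-\alpha)u=e^{\frac{\gamma}{2}t}f-(\text{terms in }v_t,v)$.

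Next I would integrate this ODE by the integrating factor $e^{-(\frac32\gamma-\alpha)t}$, obtaining $u(t)=e^{(\frac32\gamma-\alpha)t}u(0)+\int_0^t e^{(\frac32\gamma-\alpha)(t-s)}[\dots]\,ds$, with $u(0)=v_{tt}(0)-b\Delta v(0)$ expressed through $w_0,w_1,w_2$ via \eqref{e:data-for-v}. An integration by parts turns the $v_t$ contribution into an instantaneous term $\beta v(t)$ plus a convolution, and the algebra produces exactly $\beta=-\gamma(\frac34\gamma-\alpha)$ and the kernel $K(t)=-\gamma(\gamma-\alpha)^2 e^{(\frac32\gamma-\alpha)t}$ of \eqref{e:various-functions}; the decisive identity here is $\beta(\tfrac32\gamma-\alpha)-\tfrac{\gamma^2}{4}(\alpha-\tfrac{\gamma}{2})=-\gamma(\gamma-\alpha)^2$. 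At this point $v$ is seen to solve a Dirichlet problem for the wave equation of speed $\sqrt b$, namely $v_{tt}-b\Delta v=\beta v+(K\ast v)+G_0$, where $G_0$ gathers the initial-data and forcing terms, with $v(0)=w_0$, $v_t(0)=\frac{\gamma}{2}w_0+w_1$ and $v|_\Sigma=\tilde g=e^{\frac{\gamma}{2}t}g$ as in \eqref{e:data-for-v}.

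Finally I would feed this wave-with-memory equation into the representation formula \eqref{e:waves-explicit}, adapted so that the cosine/sine operators are generated by $b(-A)$ rather than $-A$; consequently $R_\pm(\cdot)$ is evaluated at $\sqrt b\,t$ and each $\cA$ carries the appropriate power of $\sqrt b$ (the boundary contribution becoming $-\sqrt b\,\cA\int_0^t R_-(\sqrt b(t-s))D\tilde g\,ds$, using $\cA^2=A$ from \eqref{e:cosine-sine}). The initial-data and boundary terms then assemble into the first line and the last term of \eqref{e:affine}; the data and forcing parts of $G_0$, passed through the sine-operator Duhamel integral, yield the $h_i$ and $\tilde f$ contributions, the identity $e^{-\alpha t}+\gamma(e^{-c^2\cdot/b}\ast e^{-\alpha\cdot})(t)=e^{-c^2 t/b}$ being what recasts $\int_0^t e^{(\frac32\gamma-\alpha)(t-s)}e^{\frac{\gamma}{2}s}f(s)\,ds$ into the $\lambda$-form of $\tilde f$ in \eqref{e:various-functions}; and the $\beta v+K\ast v$ terms move to the left-hand side as $\int_0^t L(t-s)v(s)\,ds$ with $L$ as in \eqref{e:super-kernel}, the double convolution there arising as the nested convolution of $R_-(\sqrt b\,\cdot)$ with $K\ast v$. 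This is exactly the Volterra equation \eqref{e:volterra}.

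I expect the main obstacle to be organizational rather than conceptual: keeping systematic track of the $\sqrt b$-scaling throughout the cosine/sine calculus, and correctly unwinding the nested convolution that produces $K$ inside $L$. A secondary point is that all the identities above are first carried out at the level of the abstract equation in $[\cD(\cA^*)]'$; the strong continuity of $L(\cdot)$ and the well-definedness of each term (in particular $\cK D\tilde g\in\cD(A)$ in the sense of \eqref{e:affine-convoluted}) are then secured by the boundary-to-interior estimates \eqref{e:semigroups}--\eqref{e:crucial} recalled for the second-order problem.
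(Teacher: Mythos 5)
Your proposal is correct and follows essentially the same route as the paper: the paper's proof likewise reduces \eqref{e:ibvp-mgt_1} to the intermediate integro-differential wave problem \eqref{e:ibvp-for-v} for $v=e^{\frac{\gamma}{2}t}w$ (delegating that computation to Section~3.1 of \cite{bucci-pan-jee_2019}) and then invokes the cosine/sine representation formula \eqref{e:waves-explicit}, suitably rescaled by $\sqrt{b}$. You merely carry out explicitly the factorization $\partial_t(v_{tt}-b\Delta v)$ and the integrating-factor step that the paper leaves to the cited reference, and your key identities (for $\beta$, $K$, and the recasting of the forcing term into $\tilde f$) check out.
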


\begin{proof}
It suffices to follow the perspective of \cite{bucci-pan-jee_2019}, and more specifically
the arguments in Section~3.1, which eventually result in Proposition~3.6 therein.
With slight modifications and a straightforward computation, one arrives here at the
following (equivalent) IBVP for an integro-differential equation satisfied by 
$v=e^{\frac{\gamma}{2}t}w$:
\begin{equation}\label{e:ibvp-for-v}
\begin{cases}
v_{tt}=b \Delta v + \displaystyle\int_0^t K(t-s)v(s)\,ds + \beta v \,+
\\[2mm]
\myspace + \big[h_0(t) w_0+h_1(t)w_1+h_2(t)(w_2-b\Delta w_0)\big] 
+ \tilde{f}(t) & \text{in $Q$}
\\[2mm]
v(0,\cdot)=w_0\,, \; v_t(0,\cdot)=\frac{\gamma}{2}w_0+w_1 & \text{in $\Omega$}
\\[1mm]
v=\tilde{g} & \text{on $\Sigma$}
\end{cases}
\end{equation}
(with all functions defined in \eqref{e:various-functions} and 
$\tilde{g}=e^{\frac{\gamma}{2}t} g$). 
Thus, {\em mutatis mutandis}, the representation formula \eqref{e:waves-explicit} provides the tool.
\end{proof}

\smallskip
Before starting the proof of our main result, a few considerations on the regularity
of solutions to Volterra equations are in order; partly were given already in 
\cite{bucci-pan-jee_2019}, partly are new.
Consider a general Volterra equation of the second kind, that is $v+L\ast v=h$ with
\begin{equation*}
(L \ast v)(t)=\int_0^t L(t-s)v(s)\,ds=\int_0^t L(s)v(t-s)\,ds\,.
\end{equation*}
It is known that if $L$ is a strongly continuous function of time, with values in 
$\cL(\cH)$ ($\cH$ is a Hilbert space) and $h(\cdot)$ is an integrable $\cH$-valued function,
then the corresponding solution has the explicit representation 
\begin{equation}\label{e:sln-to-volterra}
v=h+\sum _{k=1}^{\infty} (-1)^k L^{(\ast k)}\ast h\,,
\end{equation}
where $L^{(\ast n)}$ indicate the iterated convolutions, recursively defined as
follow:
\begin{equation*}
L^{(\ast 1)}=L\,,\quad  L^{(\ast(n+1))}\ast h=L\ast \big(L^{(\ast n)}\ast h\big)\,,
\qquad n\ge 1\,;
\end{equation*}
the uniform convergence of the series can be easily proved 
({\sl cf.}~e.g.~\cite[Chapter~5]{corduneanu}).
From formula \eqref{e:sln-to-volterra} we see that the regularity in time and space
of $v$ is determined by the one of $h$, as well as of the said iterated convolutions.
In the present case, with $L$ given by \eqref{e:super-kernel} (and $h$ eventually
replaced by $H$), the first iteration reads as
\begin{equation*}
\begin{split}
(L \ast h)(t)&=-\frac{\beta}{\sqrt{b}} \cA^{-1}\int_0^t R_-(\sqrt{b}(t-s)) h(s)\,ds
\\[1mm]
& \qquad 
-\frac{1}{\sqrt{b}}\cA^{-1}\int_0^t 
\Big[\int_0^{t-s} R_-(\sqrt{b}(t-s-r)) K(r)\,dr\Big]\,h(s)\,ds\,.
\end{split}
\end{equation*}
Thus, in the regularity analysis of this convolution the properties of the first summand will prevail.
Since the said term is (neglecting the constant in front) nothing but $(\cK \,h)(\cdot)$, 
with the operator $\cK$ defined in \eqref{e:affine-convoluted},
the regularity properties of $\cK$ such as e.g.
\begin{equation}\label{e:calK-improves}
\cK\in \cL(L^1(0,T;L^2(\Omega)),C([0,T],H^1_0(\Omega))) 
\end{equation}
will imply that e.g.,
\begin{equation*}
h\in L^1(0,T;L^2(\Omega)) \Longrightarrow 
L \ast h\in C([0,T],H^1_0(\Omega))\,, 
\end{equation*} 
continuously with respect to the topologies under consideration. 
The same consideration is valid, {\em a fortiori}, to the next iterated convolution,
and so on.
In conclusion, since $\cK$ is smoothing both in time and space, it will suffice to pinpoint the regularity of the affine term $h$, which determines the regularity of $v$. 

\begin{remark}\label{r:on-volterra}
\begin{rm}
To summarize the above considerations: the representation \eqref{e:sln-to-volterra}, combined with the specific structure \eqref{e:super-kernel} of the kernel $L$ in the Volterra equation $v+L\ast v=h$, guarantees $v\in C([0,T],X)$ on the basis of 
$h\in C([0,T],X)$ also in the case $X$ is a Sobolev space $H^k(\Omega)$;
namely, also in the case $X$ is not the domain of a fractional power of the operator $-A$, as are the spaces $X_\alpha$ in the statement of Lemma~4.1 in \cite{bucci-pan-jee_2019}.
\end{rm}
\end{remark}


\smallskip
\noindent
\paragraph{\em Proof of Theorem~\ref{t:main}, part (a)}
In order to establish the interior regularity of solutions to the 
Cauchy-Dirichlet problem \eqref{e:ibvp-mgt_1} as specified by \eqref{e:interior-reg}, 
in view of Proposition~\ref{p:volterra} we turn our attention to the Volterra equation
\eqref{e:volterra}, that is $v+L\ast v=H$ (in short), with $L$ and $H$ defined
by \eqref{e:super-kernel} and \eqref{e:affine}, respectively.
Because specifically $v\in C([0,T],H^2(\Omega))$ will be guaranteed by $H\in C([0,T],H^2(\Omega))$ ({\sl cf.}~Remark~\ref{r:on-volterra}), the optimal regularity (in time and space) of the affine term $H(t)$ must be pinpointed.

\vspace{1mm}
\noindent
{\bf 1.}
In order to render the computations more readable, we initially set 
\begin{equation} \label{e:calf}
\cF(t) :=h_0(t)w_0+h_1(t)w_1+h_2 (t)\big(w_2-\Delta w_0\big)\,;
\end{equation}
one needs to remember that $\cF$ depends on initial data. 
Owing to the assumption \eqref{e:hypo_3}, which yields as well
$\tilde{g}\in H^2(0,T;L^2(\Gamma))$ for $\tilde{g}= e^{\frac{\gamma}{2}t}g$, we are allowed
to integrate by parts (in time) twice in \eqref{e:affine}, thereby obtaining (after setting
$b=1$ for the sake of simplicity and the readers' convenience) first
\begin{equation} \label{e:affine-rewrite_1}
\begin{split}
H(t)&=\Big[R_+(t)+\frac{\gamma}{2}\cA^{-1}R_-(t)\Big]w_0
+\cA^{-1} R_-(t) w_1+ R_+(t-s) D\tilde{g}(s)\Big|_{s=0}^{s=t}
\\[1mm]
& \quad 
-\int_0^t R_+(t-s) D\tilde{g}_t(s)\,ds\,
+ \cA^{-1}\int_0^t R_-(t-s)\big(\cF(s)+\tilde{f}(s)\big)\, ds 
\\[2mm]
&=R_+(t)\big(w_0-D\tilde{g}(0)\big)+\cA^{-1}R_-(t)\Big[\frac{\gamma}{2}w_0+ w_1\Big]
+ D\tilde{g}(t) 
\\[1mm]
& \quad -\int_0^t R_+(t-s) D\tilde{g}_t(s)\,ds\,
+\cA^{-1}\int_0^t R_-(t-s)\big(\cF(s)+ \tilde{f}(s)\big)\,ds\,,
\end{split}
\end{equation}
and next
\begin{equation} \label{e:affine-rewrite_2}
\begin{split}
H(t)&=R_+(t)\big[w_0-Dg(0)\big]+ \cA^{-1}R_-(t)\Big[\frac{\gamma}{2}w_0+ w_1\Big]
+ D\tilde{g}(t)+\cA^{-1}D\tilde{g}_t(t) 
\\[1mm]
& \quad 
-\cA^{-1}R_{-}(t)D\tilde{g}_t(0)
- \cA^{-1}\int_0^t R_-(t-s) D\tilde{g}_{tt}(s)\,ds 
\\[1mm]
& \quad 
+\cA^{-1}\int_0^t R_-(t-s)\big(\cF(s)+\tilde{f}(s)\big)\,ds 
\\[1mm]
& =R_+(t)\big[w_0-Dg(0)\big] + \cA^{-1} R_-(t)\Big[\frac{\gamma}{2}w_0+w_1
-D\Big(\frac{\gamma}{2}g(0)+g_t(0)\Big)\Big] 
\\[1mm]
& \quad 
+ D\tilde{g}(t)+\cA^{-1}D\tilde{g}_t(t)- \cA^{-1}\int_0^t R_-(t-s) D\tilde{g}_{tt}(s)\,ds\, 
\\[1mm]
& \quad 
+\cA^{-1}\int_0^t R_-(t-s)
\big[h_0(s)w_0+h_1(s)w_1+h_2 (s)\big(w_2-\Delta w_0\big)+\tilde{f}(s)\big]\,ds 
\\[1mm]
& 
=:\sum_{i=1}^9T_i\,.
\end{split}
\end{equation}
Let us examine either summand $T_i$, $i=1,2,\dots,9$ in \eqref{e:affine-rewrite_2}.
Recall the assumptions $w_0\in H^2(\Omega)$, $w_1\in H^1(\Omega)$, along with the definition \eqref{e:green-map} of the Dirichlet map $D$;
it will be used that 
\begin{equation} \label{e:improve-one-half}
D\in \cL(H^s(\Gamma),H^{s+1/2}(\Omega)) \qquad \forall s\,.  
\end{equation}

\noindent
Then, with $g(0)\in C([0,T],H^{3/2}(\Gamma))$ and $g_t(0)\in C([0,T],H^{1/2}(\Gamma))$, we see that 
$w_0-Dg(0)\in H^2(\Omega)$ and $w_1-Dg_t(0)\in H^1(\Omega)$, respectively;
in addition, on account of the compatibility conditions \eqref{e:cc}, we also know that
\begin{equation*}
\big[w_0-Dg(0)\big]\big|_{\Gamma}=0\,, \qquad \big[w_1-Dg_t(0)\big]\big|_\Gamma=0\,.
\end{equation*}
The above means in particular $w_0-Dg(0)\in \cD(A)$, so that the action of the cosine operator $R_+(t)$ generated by $A$ ensures that 
$T_1:=R_+(\cdot)\big[w_0-Dg(0)\big]\in C([0,T],\cD(A))$ as well.
Similarly, we have
\begin{equation*}
T_2:=\cA^{-1} R_-(\cdot)\Big[\frac{\gamma}{2}\big(w_0-Dg(0)\big)+\big(w_1-Dg_t(0)\big)\Big]
\in C([0,T],\cD(A))
\end{equation*}
and conclude that
\begin{equation}\label{e:terms_12}
T_i\in C([0,T],\cD(A))\subset C([0,T],H^2(\Omega))\,, \qquad i=1,2\,.
\end{equation} 
In view of the regularity assumptions \eqref{e:hypo_3}-\eqref{e:hypo_4} on the boundary datum, and taking into account once again \eqref{e:improve-one-half}, one finds
\begin{equation}\label{e:terms_34}
T_3+T_4:=D\tilde{g}+\cA^{-1}D\tilde{g}_t\in C([0,T],H^2(\Omega))\,.
\end{equation} 
As for $T_5$, readily
\begin{equation}\label{e:term_5}
T_5=-\cA^{-1}\int_0^t R_-(t-s) D\tilde{g}_{tt}(s)\,ds
= \cA^{-2}\Big\{-\cA\,\int_0^t R_-(t-s) D\tilde{g}_{tt}(s)\,ds\Big\}
\in C([0,T],\cD(A))
\end{equation}
as a consequence of the regularity result \eqref{e:crucial}, which holds true since
$\tilde{g}_{tt}\in L^2(\Sigma)$ (the role of $g$ is played by $\tilde{g}_{tt}$, here). 

Of the three summands 
$T_6=\big(\cK (h_0(\cdot)w_0\big)(t)$ and $T_7=\big(\cK (h_1(\cdot)w_1\big)(t)$  
and 
\begin{equation*}
T_8=\cA^{-1}\int_0^t R_-(t-s)h_2 (s)\big(w_2-\Delta w_0\big)\,ds
=\big(\cK (h_2(\cdot)\big(w_2-\Delta w_0)\big)(t)\,,
\end{equation*}
it is sufficient to analyze the latter. 
We simply have $w_2-\Delta w_0\in L^2(\Omega)$ by Assumption \eqref{e:hypo_1},
and \eqref{e:calK-improves} does not suffice to obtain the needed space regularity.
Then, we use the fact that $h_2\in C^\infty$, integrate by parts (in time),
and establish
\begin{equation}\label{e:term_8}
\begin{split}
T_8 &=-\cA^{-2}R_+(t-s)h_2 (s)\big(w_2-\Delta w_0\big)\Big|_{s=0}^{s=t}
+ \cA^{-2}\int_0^t R_+(t-s)h_2'(s)\big(w_2-\Delta w_0\big)\,ds
\\[1mm]
&=-\cA^{-2}\Big\{h_2(t)\big(w_2-\Delta w_0\big)-R_+(t)h_2(0)\big(w_2-\Delta w_0\big)
\\[1mm]
& \myspace - \int_0^t h_2'(t-s) R_+(s)\big(w_2-\Delta w_0\big)\,ds\Big\}
\in C([0,T],\cD(A))\,.
\end{split}
\end{equation}
That 
\begin{equation}\label{e:terms_67}
T_6\,, \, T_7\in C([0,T],\cD(A)) 
\end{equation}
follows similarly (neglecting the better regularity in space, as it is here unnecessary).

To analyze the term
\begin{equation*}
T_9:=-\cA^{-1}\int_0^t R_-(t-s)\tilde{f}(s)\,ds
\end{equation*}
that is $T_9=\big(\cK \tilde{f}(\cdot)\big)(t)$, we observe preliminarly that the regularity of $\tilde{f}(t)$ is determined by the one of $\lambda(t)$, which in turn
is the same of $\big(e^{-\alpha \cdot}\ast f\big)(t)$.
Since by assumption $f\in L^2(Q)$, then $\lambda\in H^1(0,T;L^2(\Omega))$, with
\begin{equation*}
\lambda'(t) = \frac{d}{dt}\int_0^t e^{-\alpha(t-s)}f(s)\,ds=
f(t) - \alpha \lambda(t)\,.
\end{equation*}
Therefore, we find
\begin{equation*}
\begin{split}
& \cA^{-1}\int_0^t R_-(t-s)\lambda(s)\,ds=
-\cA^{-2}\Big\{R_+(t-s)\lambda(s)\Big|_{s=0}^{s=t}
- \int_0^t R_+(t-s)\big(f(s) - \alpha \lambda(s)\big)\,ds\Big\} 
\\[1mm]
&=-\cA^{-2}\Big\{\lambda (t) 
- \int_0^t R_+(t-s)\big(f(s) - \alpha \lambda(s)\big)\,ds\Big\} 
\\[1mm]
&=-\cA^{-2}\Big\{\lambda (t)-\int_0^t R_+(t-s)\big(f(s) - \alpha \lambda(s)\big)\,ds\Big\}
\in C([0,T],\cD(A))\,,
\end{split}
\end{equation*}
which gives 
\begin{equation}\label{e:term_9}
T_9\in C([0,T],\cD(A))\subset C([0,T],H^2(\Omega))
\end{equation}
as well.
In conclusion, combining \eqref{e:term_9} with \eqref{e:term_8}, \eqref{e:terms_67}, \eqref{e:term_5}, \eqref{e:terms_34} and \eqref{e:terms_12} we find 
$$H\in C([0,T],H^2(\Omega))\,,$$
which implies $v\in C([0,T],H^2(\Omega))$; consequently, we attained 
$w\in C([0,T],H^2(\Omega))$, that is the regularity statement for the 
`position' in \eqref{e:interior-reg}.

\vspace{1mm}
\noindent
{\bf 2.}
We need to show now that regularity pertaining to $w_t$ and $w_{tt}$ in 
\eqref{e:interior-reg} holds true, namely, that 
$(w_t,w_{tt})\in C([0,T],H^1(\Omega)\times L^2(\Omega))$ holds.
We proceed similarly as in the proof of \cite[Theorem~4.2]{bucci-pan-jee_2019}:
the said regularity will be inherited by the regularity of the derivatives $v_t$ and $v_{tt}$
of the solution $v$ to the Volterra equation \eqref{e:volterra}, which is in turn determined by the one of the respective right hand sides of the Volterra equations satisfied by $v_t$ and $v_{tt}$.
Rewrite \eqref{e:volterra} as 
\begin{equation*} 
v(t)+\displaystyle\int_0^t L(s)v(t-s)\,ds=H(t)
\end{equation*}
to deduce that $v_t$ satisfies
\begin{equation}\label{e:volterra-v_t} 
v_t(t)+ \displaystyle\int_0^t L(t-s)v_t(s)\,ds=H_t(t)-L(t) v_0\,,
\end{equation}
where -- given the expression \eqref{e:super-kernel} of the kernel $L$ -- we know
that 
$$L(t) v_0 =L(t) w_0\in C([0,T],\cD(\cA))\subset C([0,T],H^1(\Omega))\,.$$
Therefore, aiming at showing the regularity statement for the `velocity' $w_t$ in 
\eqref{e:interior-reg}, and given the right hand side in the Volterra equation
\eqref{e:volterra-v_t}, the regularity $H_t\in C([0,T],H^1(\Omega))$ is what we
need to prove.
Restart from \eqref{e:affine-rewrite_1}, to find -- after an integration by parts (in time) in two summands --
\begin{align*}
H_t(t)&=\cA R_-(t)\big[w_0-Dg(0)\big]+R_+(t)\Big[\frac{\gamma}{2}w_0+ w_1\Big]
\nonumber\\[1mm]
& \quad -\cA \int_0^t R_-(t-s) D\tilde{g}_t(s)\,ds\, 
+ \int_0^t R_+(t-s)\big[\cF(s)+ \tilde{f}(s)\big]\,ds 
\nonumber\\[1mm]
& =\cA R_-(t)\big[w_0-Dg(0)\big]+R_+(t)\Big[\frac{\gamma}{2}w_0+ w_1\Big]
+R_+(t-s)D\tilde{g}_t(s)\Big|_{s=0}^{s=t} 
\nonumber\\[1mm]
& \quad -\int_0^t R_+(t-s) D\tilde{g}_{tt}(s)\,ds  
- \cA^{-1} R_-(t-s)\big[\cF(s)+ \tilde{f}(s)\big]\Big|_{s=0}^{s=t}
\nonumber\\[1mm]
& \quad 
+ \cA^{-1}\int_0^t R_-(t-s)\big[\cF'(s)+ \tilde{f}'(s)\big]\,ds 
\nonumber\\[1mm]
& = \cA R_-(t)\big[w_0-Dg(0)\big]+R_+(t)\Big[\frac{\gamma}{2}w_0+ w_1\Big]
+D\tilde{g}_t(t)- R_+(t)D\tilde{g}_t(0) 
\nonumber\\[1mm]
& \quad -\int_0^t R_+(t-s) D\tilde{g}_{tt}(s)\,ds + \cA^{-1} R_-(t)\cF(0)
\nonumber\\[1mm]
& \quad
+ \cA^{-1}\int_0^t R_-(t-s)\big[\cF'(s)+\tilde{f}'(s)\big]\,ds\,,
\end{align*}
where it has been used that $\tilde{f}(0)=0$ and once again that $R_-(0)=0$
(the term $D\tilde{g}_t(t)$ and its opposite are canceled at the outset).
Substituting $\tilde{g}_t(0)=\frac{\gamma}{2}g(0)+ g_t(0)$ in the latter espression,
we arrive at the following clean representation of $H_t(t)$:
\begin{equation}\label{e:derive-affine_1bis}
\begin{split}
H_t(t)&=\cA R_-(t)\big[w_0-Dg(0)\big]
+R_+(t)\Big[\frac{\gamma}{2}\big(w_0-Dg(0)\big)+\big(w_1-Dg_t(0)\big)\Big]
+D\tilde{g}_t(t) 
\\[1mm]
& \quad -\int_0^t R_+(t-s) D\tilde{g}_{tt}(s)\,ds + \cA^{-1} R_-(t)\cF(0) 
\\[1mm]
& \quad
+ \cA^{-1}\int_0^t R_-(t-s)\big[\cF'(s)+ \tilde{f}'(s)\big]\,ds\,.
\end{split}
\end{equation}
We examine now each summand in the right hand side of \eqref{e:derive-affine_1bis} to find, in succession,
\begin{equation*}
\begin{split}
& \cA R_-(\cdot)\big[w_0-Dg(0)\big]\in C([0,T],\cD(\cA))\subset C([0,T],H^1(\Omega))\,,
\\[2mm]
& R_+(\cdot)\Big[\frac{\gamma}{2}\big(w_0-Dg(0)\big)+\big(w_1-Dg_t(0)\big)\Big]
\in C([0,T],\cD(\cA))\,,
\\[2mm]
& D\tilde{g}_t\in C([0,T],H^1(\Omega))\,, \quad \text{in view of} \; 
\tilde{g}_t\in C([0,T],H^{1/2}(\Gamma))\,,
\\[2mm]
& \int_0^t R_+(t-s) D\tilde{g}_{tt}(s)\,ds\in C([0,T],\cD(\cA))\,,
\quad \text{in view of \eqref{e:crucial}}
\\[2mm]
& \cA^{-1} R_-(\cdot)\cF(0)\in C([0,T],\cD(\cA)) \quad \text{because of} \; 
R_-(\cdot)\cF(0)\in C([0,T],L^2(\Omega))\,,
\\[2mm]
& \cA^{-1}\int_0^\cdot R_-(\cdot-s) \big[\cF'(s)+ \tilde{f}'(s)\big]\,ds
\in C([0,T],\cD(\cA))\,,
\end{split}
\end{equation*}
that confirms the membership 
\begin{equation*}
H_t\in C([0,T],H^1(\Omega))\,.
\end{equation*}
Consequently, the above establishes $v_t\in C([0,T],H^1(\Omega))$, so that 
$w_t\in C([0,T],H^1(\Omega))$, as required by \eqref{e:interior-reg}.
It remains to prove $w_{tt}\in C([0,T],L^2(\Omega))$, which is accomplished in the
next step.

\vspace{1mm}
\noindent
{\bf 3.}
From \eqref{e:volterra-v_t} it follows that $v_{tt}$ solves the Volterra equation
\begin{equation}\label{e:volterra-v_tt} 
v_{tt}+ \displaystyle\int_0^t L(t-s)v_{tt}(s)\,ds=H_{tt}(t)-\frac{d}{dt} \Big[L(t) v_0\Big]-L(t)v_1\,,
\end{equation}
where it is immediately seen that
\begin{equation}\label{e:trivial}
\frac{d}{dt}\Big[L(t) v_0\Big]-L(t)v_1 
= -\beta R_+(t)v_0 - \int_0^t R_+(t-s)K(s)v_0\,ds - L(t)v_1 \in C([0,T],L^2(\Omega))\,.
\end{equation}
To establish the optimal regularity of the right hand side in \eqref{e:volterra-v_tt},
we pinpoint the one of $H_{tt}$.
We resume then \eqref{e:derive-affine_1bis}, derive one more time with respect to $t$, thus obtaining
\begin{equation*}
\begin{split}
H_{tt}(t)&=\cA^2 R_+(t)\big[w_0-Dg(0)\big]
+\cA R_-(t)\Big[\frac{\gamma}{2}\big(w_0-Dg(0)\big)+\big(w_1-Dg_t(0)\big)\Big] 
\\[1mm]
& \quad 
-\cA\int_0^t R_-(t-s) D\tilde{g}_{tt}(s)\,ds + R_+(t)\cF(0) 
\\[1mm]
& \quad 
+\int_0^t R_+(t-s)\big[\cF'(s)+ \tilde{f}'(s)\big]\,ds\,,
\end{split}
\end{equation*}
where all summands readily belong to $C([0,T],L^2(\Omega))$.
(Again, the term $D\tilde{g}_{tt}(t)$ and its opposite produce a cancellation.)
%
The obtained regularity $H_{tt}\in C([0,T],L^2(\Omega))$ combined with 
\eqref{e:trivial} implies the same membership for the right hand side
of \eqref{e:volterra-v_tt} and hence for $v_{tt}$ and $w_{tt}$.
\qed

\smallskip
\noindent
\paragraph{\em Boundary regularity results.}
To pinpoint the regularity of the normal traces, assume the stronger hypothesis $g\in H^2(\Sigma)$. We resume the IBVP \eqref{e:ibvp-for-v} satisfied by $v=e^{\gamma t/2}w$,
where $\cF(t)$ denotes the function in \eqref{e:calf} that depends on initial data
$(w_0,w_1,w_2)$; 
the present initial and boundary data $(v_0,v_1,\tilde{g})$ are defined in terms of $(w_0,w_1,g)$ as in \eqref{e:data-for-v}.
Thus, denote by $\hat{f}(\cdot)$ the sum
\begin{equation*}
\hat{f}(t) = \underbrace{\beta v(t)+\displaystyle\int_0^t K(t-s)v(s) ds}_{f_0(t)}
+ \underbrace{h_0(t)w_0+h_1(t)w_1}_{f_1(t)}
+ \underbrace{h_2(t)\big(w_2-\Delta w_0\big)}_{f_2(t)}
+ \tilde{f}(t)\,,
\end{equation*}
that we will consider as an affine term in \eqref{e:ibvp-for-v}.
We then have $v(t) = z(t) + v_2(t)$, having set
\begin{equation}\label{e:satisfies-wave}
\begin{split}
z(t)&:=R_+(t)v_0+\cA^{-1}R_-(t)v_1+\cA^{-1}\int_0^t  R_-(t-s)\big[f_0(s)+f_1(s)\big]\,ds
\\
& \myspace 
- \cA\int_0^t R_-(t-s) D\tilde{g}(s)\,ds\,,
\\[1mm]
v_2(t)&:=\cA^{-1}\int_0^t R_-(t-s) \big[f_2(s)+\tilde{f}(s)\big]\,ds
=: \big[\cK\big(f_2(\cdot)+\tilde{f}(\cdot)\big)\big](t)\,.
\end{split}
\end{equation}
The analysis of the (sharp) boundary regularity of the summand $z$ is straightforward:
indeed, in view of the regularity assumed on $(w_0,w_1,g)$ we know that 
$v_0\in H^2(\Omega)$, $v_1\in H^1(\Omega)$, $\tilde{g}\in H^2(\Sigma)$ (with all meaningful compatibility conditions), while it is
easily seen that $f_0+f_1$ belongs to $C^\infty([0,T],H^1(\Omega))\subset L^2(0,T;H^1(\Omega))$.
(To ascertain the latter claim, observe that $f_0(\cdot)$ possesses the interior regularity
of $v(\cdot)$, i.e.~$v\in C([0,T],H^2(\Omega))$ -- in view of part {\em (a)} of the proof --, which is even stronger than required,
whilst $f_1\in C^\infty([0,T],H^1(\Omega))$ because 
$w_1, w_0\in H^1(\Omega)$ and $h_i\in C^\infty$, $i=0,1$.)
Consequently, the trace result 
\begin{equation}\label{e:stronger-trace}
\frac{\partial z}{\partial \nu}\in H^1(\Sigma)= L^2(0,T;H^1(\Omega))\cap H^1(0,T;L^2(\Gamma))
\end{equation}
is valid (\cite{sakamoto82}, \cite[Theorem 2.2]{las-lions-trig_1986}); in particular, 
\begin{equation}\label{e:boundary-reg_1}
\frac{\partial^2 z}{\partial t\,\partial \nu}\in L^2(\Sigma)\,.
\end{equation}
Let us prove that $\frac{\partial v_2}{\partial \nu}\in H^1(0,T;L^2(\Gamma))$,
as well.
Rewrite $v_2=v_{21}+v_{22}$, and examine the first summand
\begin{equation*}
v_{21}(t):=\big(\cK f_2\big)(t)
= \cA^{-1}\int_0^t h_2(t-s) R_-(s)(w_2-\Delta w_0)\,ds\,.
\end{equation*}
Since $h_2\in C^1$, then
\begin{equation*}
\begin{split}
v_{21t}(t) &= \frac{\partial}{\partial t}v_{21}(t)
= \cA^{-1}h_2(0)R_-(t)(w_2-\Delta w_0) 
+\cA^{-1}\int_0^t h_2'(t-s)R_-(s) (w_2-\Delta w_0)\,ds
\\[1mm]
& = \cA^{-1}R_-(t)h_2(0)(w_2-\Delta w_0)
+\cA^{-1}\int_0^t R_-(t-s) h_2'(s)(w_2-\Delta w_0)\,ds\,,
\end{split}
\end{equation*}
which shows that $v_{21t}(t)$ is the solution to an IBVP for a second-order linear wave equation such as \eqref{e:ibvp-wave}, with specifically 
\begin{equation*}
\begin{split}
z_0=0\,, \quad z_1=h_2(0)\big(w_2-\Delta w_0\big)\in L^2(\Omega)\,, 
\\[1mm]
f(t) = h_2'(s)\big(w_2-\Delta w_0\big)\in C^\infty([0,T],L^2(\Omega))\,, \quad g\equiv 0\,.
\end{split}
\end{equation*}
The regularity of initial and boundary data, along with the compatibility condition 
$z_0|_\Gamma= g|_{t=0}=0$ ensure then
\begin{equation*}
\frac{\partial v_{21t}}{\partial \nu}\in L^2(\Sigma)
\end{equation*}
({\sl cf.}~\cite{sakamoto82}, \cite{las-lions-trig_1986}).
A similar computation performed for the second component 
$v_{22}(t)=\big(\cK \tilde{f}\big)(t)$ -- where it is utilized that
$\tilde{f}\in H^1(0,T;L^2(\Omega))$ -- allows to confirm 
the same regularity of its normal trace on $\Gamma$. 
Consequently,
\begin{equation}\label{e:boundary-reg_2}
\frac{\partial^2 v_2}{\partial t\,\partial \nu}\in L^2(\Sigma)
\end{equation}
\smallskip
as well.

Recalling that $w=e^{-\gamma t/2}v$ and $v=z+v_2$, in view of the boundary regularity results \eqref{e:boundary-reg_1} and \eqref{e:boundary-reg_2} established for either summand, the very same regularity is valid for $w$.
The trace regularity result is stated explicitly in the following Proposition.

\begin{proposition}\label{p:trace-partial}
Under the hypotheses of Theorem~\ref{t:main}, part (a) with the stronger assumption $g\in H^2(\Sigma)$ on the Dirichlet data, the following boundary regularity result holds true:
\begin{equation*}
\frac{\partial^2 w}{\partial t\,\partial \nu}\in L^2(\Sigma)\,.
\end{equation*}

\end{proposition}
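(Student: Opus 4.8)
The plan is to reduce the claim to a normal-trace estimate for the exponentially rescaled unknown $v = e^{\gamma t/2} w$, for which the equivalent IBVP \eqref{e:ibvp-for-v} and the representation formula \eqref{e:waves-explicit} are already at our disposal. Since $w_t = e^{-\gamma t/2}\big(v_t - \tfrac{\gamma}{2} v\big)$ and the scalar factor is smooth on $[0,T]$, establishing $\partial_\nu w_t \in L^2(\Sigma)$ amounts to proving $\partial_\nu v_t \in L^2(\Sigma)$; the companion contribution $\partial_\nu v$ is already under control, since the interior regularity $v \in C([0,T],H^2(\Omega))$ from part (a) yields $\partial_\nu v \in C([0,T],H^{1/2}(\Gamma)) \subset L^2(\Sigma)$ by the trace theorem. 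The idea is then to split the source in \eqref{e:ibvp-for-v} into a \emph{regular} part, carrying the boundary datum $\tilde g$ and an $H^1(\Omega)$-valued forcing, and a \emph{rough} part whose source lies only in $L^2(\Omega)$, and to treat the two by different devices.

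First I would write $v = z + v_2$ as in \eqref{e:satisfies-wave}, letting $z$ solve the Dirichlet wave equation carrying the initial data $(v_0,v_1)$, the nonhomogeneous boundary datum $\tilde g$, and the forcing $f_0 + f_1$, while $v_2 = \cK(f_2 + \tilde f)$ collects the purely $L^2(\Omega)$-valued source. Under the stronger hypothesis \eqref{e:stronger-hypo} one has $v_0 \in H^2(\Omega)$, $v_1 \in H^1(\Omega)$, $\tilde g \in H^2(\Sigma)$ with the relevant compatibility conditions, together with $f_0 + f_1 \in C([0,T],H^1(\Omega))$ (the $H^2(\Omega)$-regularity of $v$ from part (a) being even stronger than needed for $f_0$). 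These are exactly the assumptions under which the sharp trace theorem for the Dirichlet wave equation (\cite{sakamoto82}, \cite[Theorem 2.2]{las-lions-trig_1986}) gives $\partial_\nu z \in H^1(\Sigma)$, and hence $\partial_\nu z_t \in L^2(\Sigma)$.

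The harder term is $v_2$, and here the crucial point is that $\cK$ applied to a merely $L^2(\Omega)$-valued source gains only one space derivative in the interior, cf. \eqref{e:calK-improves}, which is not enough to invoke the $H^1(\Sigma)$ trace bound directly. The device I would use is to differentiate in time \emph{before} taking the trace: writing $f_2(t) = h_2(t)(w_2 - \Delta w_0)$ with $h_2 \in C^\infty$ and integrating by parts in the convolution $\cK f_2$, one finds that $v_{2t}$ is itself the solution of a Dirichlet wave equation with $z_0 = 0$, $z_1 = h_2(0)(w_2 - \Delta w_0) \in L^2(\Omega)$, an $L^2(\Omega)$-valued forcing, and homogeneous boundary data. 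For such data the hidden (sharp) trace regularity for the wave equation yields $\partial_\nu v_{2t} \in L^2(\Sigma)$; the same argument applies verbatim to the component generated by $\tilde f$, using $\tilde f \in H^1(0,T;L^2(\Omega))$.

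Combining the two contributions gives $\partial_\nu v_t = \partial_\nu z_t + \partial_\nu v_{2t} \in L^2(\Sigma)$, and undoing the exponential substitution yields the asserted $\partial_\nu w_t \in L^2(\Sigma)$, with continuous dependence on the data inherited from each underlying estimate. The main obstacle, as anticipated, is the low regularity of the source $f_2$: the naive route through the $H^1(\Sigma)$ trace result fails, and it is essential to first differentiate in time so as to recast $v_{2t}$ as a wave solution with zero boundary data and square-integrable initial velocity, for which the sharp $L^2(\Sigma)$ normal-trace bound is precisely applicable.
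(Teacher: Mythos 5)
Your proposal is correct and follows essentially the same route as the paper: the same decomposition $v=z+v_2$ from \eqref{e:satisfies-wave}, the sharp trace theorem of \cite{sakamoto82}, \cite{las-lions-trig_1986} for the regular part $z$, and the key device of differentiating $v_2=\cK(f_2+\tilde f)$ in time first so that $v_{2t}$ becomes a Dirichlet wave solution with homogeneous boundary data and $L^2(\Omega)$ initial velocity. Your additional remark that $\partial_\nu v\in L^2(\Sigma)$ follows from the interior regularity of part (a) correctly handles the lower-order term arising when undoing the exponential substitution.
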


\bigskip
Return now to $v=z+v_2$ and recall that the first summand $z$ satisfies the full trace result \eqref{e:stronger-trace} anyhow.
We thus examine the summand $v_2$ and observe that if $f\in L^1(0,T;H^1(\Omega))$,
we also have $\tilde{f}\in L^1(0,T;H^1(\Omega))$ (the time regularity is better, actually) so that
\begin{equation*}  
\frac{\partial}{\partial \nu}\big(\cK \tilde{f}\big)\in H^1(\Sigma)
\end{equation*}
(\cite{sakamoto82}, \cite[Theorem 2.2]{las-lions-trig_1986}).
As for the component $v_{21}$, the same arguments yield
\begin{equation*}  
\frac{\partial v_{21}}{\partial \nu}
=\frac{\partial}{\partial \nu}\big[\cK \big(h_2(\cdot)(w_2-\Delta w_0\big)\big]
\in H^1(\Sigma)\,,
\end{equation*}
provided $w_2-\Delta w_0\in H^1(\Omega)$.

\begin{remark}\label{r:trace-partial}
\begin{rm}
One the basis of these last observations, it appears that the approach taken in the present section enables us to achieve the full trace result $\partial u/\partial \nu \in H^1(\Sigma)$ under the hypotheses of Theorem~\ref{t:main}, part (a), provided that
\begin{equation*}
g\in H^2(\Sigma)\,, \qquad
f\in L^1(0,T;H^1(\Omega))\,, \qquad w_2-b\Delta w_0\in H^1(\Omega)\,,
\end{equation*}
which is true e.g. in the case $w_0=w_2=0$, still with $f\in L^1(0,T;H^1(\Omega))$.
\end{rm}
\end{remark}


\section{An approach based on the theory of hyperbolic equations: 
Proof of Theorem~\ref{t:main}, part (b)} 
\label{s:hyperbolic-systems}
Even though the characteristic boundary will force us to make some adjustments to the general theory of mixed problems for hyperbolic equations, the overall procedure will be the same as described by Sakamoto \cite[Chapter 3]{sakamoto82}. We start by deriving a {\it resolvent estimate} for the operator $M$. A duality argument establishes then the existence and uniqueness of the initial-boundary value problem with homogeneous initial data. In order to include inhomogeneous initial data one needs to extend the resolvent estimate to a {\it semigroup estimate}. 

Throughout this section we will work with the infinite time interval and denote $Q_\infty = \R \times \Omega$ and $\Sigma_\infty=\R \times \Gamma$. 
From the theory of hyperbolic equations, we know the estimate
\[
 \beta \|e^{-\beta t}v\|^2_{1,\beta,Q_\infty} + \|e^{-\beta t}\partial_\nu v\|^2_{\Sigma_\infty} \lesssim \frac{1}{\beta} \|e^{-\beta t}(\partial_t^2 - b\Delta)v\|_{Q_\infty}^2 +\|e^{-\beta t}v\|^2_{1,\beta,\Sigma_\infty}
\]
for sufficiently large $\beta$ and all $v$ satisfying $e^{-\beta t}v \in H^2(Q)$, see e.g. \cite[formula (24.1.4)]{hormander85} Here the norms are weighted Sobolev norms
\[
 \|u\|^2_{k,\beta,Q_\infty} = \sum_{|\alpha|\le k} \beta^{2k-2|\alpha|}\|\partial^\alpha u\|^2_{L^2(Q_\infty)}\quad  \mbox{ and } \quad \|u\|^2_{k,\beta,\Sigma_\infty} = \sum_{|\alpha|\le k} \beta^{2k-2|\alpha|}\|\partial^\alpha u\|^2_{L^2(\Sigma_\infty)}\;,
\]
where the derivatives in the second norm are all tangential derivatives. 

 Choose $w \in H^3(Q)$ and set $v=\partial_t w$. The estimate above appear then as  
\[
 \beta \|e^{-\beta t}w_t\|^2_{1,\beta,Q_\infty} + \|\partial_\nu w_t\|^2_{\Sigma_\infty} \lesssim \frac{1}{\beta} \|e^{-\beta t}(\partial_t^2 - b\Delta)w_t\|_{Q_\infty}^2 +\|e^{-\beta t}w_t\|^2_{1,\beta,\Sigma_\infty}\;,
\]
which is already an estimate for the principal part of the MGT operator. However, we estimate only time-derivatives. Hence, we add to this estimate the original estimate of the wave operator with $v=w$, multiplied with $\beta^2$
and obtain
\begin{multline*}
 \beta^3\|e^{-\beta t}w\|^2_{1,\beta,Q_{\infty}}+\beta \|e^{-\beta t}w_t\|^2_{1,\beta,Q_\infty} + \beta^2\|e^{-\beta t}\partial_\nu w\|^2_{\Sigma_\infty}+ \|e^{-\beta t}\partial_\nu w_t\|^2_{\Sigma_\infty} \\ \lesssim \frac{1}{\beta} \|e^{-\beta t}(\partial_t^2- b\Delta)w_t\|^2_{Q_\infty} + \beta\|e^{-\beta t}(\partial_t^2 - \Delta)w\|_{Q_\infty} + \|e^{-\beta t}w_t\|^2_{1,\beta, \Sigma_\infty} + \beta^2 \|e^{-\beta t}w\|^2_{1,\beta,\Sigma_\infty}\;.
\end{multline*}
Using now the fact that $\partial_t$ is a hyperbolic operator we estimate
\begin{equation}\label{2}
 \beta \| e^{-\beta t}(\partial_t^2 - b\Delta)w\|^2_{Q_\infty} \lesssim \frac{1}{\beta} \|e^{-\beta t}(\partial_t^2 - b\Delta)w_t\|_{Q_\infty}^2
\end{equation}
which improves the estimate above to  
\begin{multline}\label{3}
 \beta \| e^{-\beta t} (\beta w,w_t)\|_{1,\beta,Q_\infty}^2 + \|e^{-\beta t} (\beta \partial_\nu w, \partial_\nu w_t)\|^2_{\Sigma_\infty}\\ \lesssim \frac{1}{\beta}\|e^{-\beta t}(\partial_t^3 - b\Delta\partial_t)w\|_{Q_\infty}^2 +\|e^{-\beta t}(\beta w,w_t)\|^2_{1,\beta,\Sigma_\infty}\;.
\end{multline}
Next we will show that in the last estimate we can replace the operator of third order $\partial_t^3 - b\Delta\partial_t$ which is the principal part of the MGT equation, by the operator $M$. This is not a triviality here since our estimate does not have all derivatives of second-order on the left hand side. From (\ref{2}) and the triangle inequality we infer that 
\[
 \beta \| e^{-\beta t}\Delta w\|^2_{Q_\infty}\lesssim \beta \| e^{-\beta t}(\partial_t^2 - b\Delta)w\|^2_{Q_\infty} + \beta \| e^{-\beta t}\partial_t^2 w\|^2_{Q_\infty}\;, 
\]
which tells us that we can include the Laplacian of $w$ on the left-hand side in (\ref{3}), that is 
\begin{multline*}
 \beta \| e^{-\beta t}\Delta w\|^2_{Q_\infty}+
 \beta \| e^{-\beta t} (\beta w,w_t)\|_{1,\beta,Q_\infty}^2 + \|e^{-\beta t} (\beta \partial_\nu w, \partial_\nu w_t)\|^2_{\Sigma_\infty}\\ \lesssim \frac{1}{\beta}\|e^{-\beta t}(\partial_t^3 - b\Delta\partial_t)w\|_{Q_\infty}^2 +\|e^{-\beta t}(\beta w,w_t)\|^2_{1,\beta,\Sigma_\infty}\;.
\end{multline*}
Since the lower order term of the MGT equation is combination of the Laplacian and a second-order time derivative, this last estimate -- in connection with the triangle inequality -- provides our basic estimate of the MGT operator $M$, that is, 
\begin{equation}\label{1i}
  \beta \| e^{-\beta t} (\beta w,w_t)\|_{1,\beta,Q_\infty}^2 + \|e^{-\beta t} (\beta \partial_\nu w, \partial_\nu w_t)\|^2_{\Sigma_\infty} \lesssim \frac{1}{\beta}\|e^{-\beta t}Mw\|_{Q_\infty}^2 +\|e^{-\beta t}(\beta w,w_t)\|^2_{1,\beta,\Sigma_\infty}\;.
\end{equation} 
It may be worthwhile to compare the estimate (\ref{1i}) with the standard resolvent estimates for hyperbolic boundary problems in the non-characteristic case \cite[Section 3.3]{sakamoto82}. While we do not manage to estimate neither the $H^2(Q_\infty)$ norm of $w$ nor the $H^1(\Sigma_\infty)$ of the exterior normal derivative on the boundary, our estimate does not require the $H^2(\Sigma_\infty)$ norm $w$ on the  right-hand side. Furthermore, the $L^2$-norm of $\partial^2_\nu w$ cannot occur in the left-hand side, since the operator $\partial_t^3-\Delta\partial_t$ does not involve any normal derivative of order 3. What is more surprising is that the $H^1$-norm of the normal derivative (of order one) does not occur on the left-hand side in (\ref{1i}). On the other hand, even the boundary term on the right-hand side is different than in the non-characteristic case.

We also wish to point out that we can obtain estimate (\ref{1i}) by using  micro-local analysis, as in \cite[Section 3.3]{sakamoto82}. The Dirichlet boundary operator satisfies the Kreiss-Sakamoto condition (uniform Lopatinskii condition) with respect to the operator $M$. However, we feel that our approach is more direct, also since the initial estimate for the wave operator can be established by energy integrals \cite[Section 24.1]{hormander85}.

Furthermore, estimates similar to (\ref{1i}) can be derived. For example, one can use elliptic estimates to include second-order space derivatives which will force us to include the norm in $L^2(\R,H^{3/2}(\Gamma))$ of $w$ in the right-hand side. This way we could provide an alternative proof to Theorem \ref{t:main}, part~(a) and we would gain complete flexibility with respect to lower-order terms. While the notation of a finite energy solution is clearly defined in the non-characteristic case, it seems that in the characteristic case several avenues can be pursued. 

 At this point in time we do not know whether the more complete estimate (with respect to the traces)  
\[
 \|e^{-\beta t} w\|_{2,\beta,Q_\infty}^2 + \|e^{-\beta t} \partial_\nu w\|^2_{1,\beta,\Sigma_\infty} \lesssim \frac{1}{\beta}\|e^{-\beta t}Mw\|_{Q_\infty}^2 +\|e^{-\beta t}w\|^2_{2,\beta,\Sigma_\infty}  
\] 
holds.

Now we will discuss, how the apriori estimate (\ref{1i}) can be used to obtain and existence-and-uniqueness statement for our initial-boundary-value problem. Note at first that, by setting $u=e^{-\beta t}w$, we have 
\begin{equation}\label{2}
 \beta\|(\beta u,u_t)\|_{1,\beta,Q_\infty}^2 +
 \|(\beta \partial_\nu u,\partial_\nu u_t)\|^2_{\Sigma_\infty} \lesssim \frac{1}{\beta}\|M_\beta u\|_{Q_\infty}^2 +
 \|(\beta u,u_t)\|^2_{1,\beta,\Sigma_\infty}\;,
\end{equation} 
where 
\[
 e^{-\beta t} M w = (\partial_t+\beta)^3 u - b\Delta (\partial_t+\beta) u +\alpha (\partial_t +\beta)^2u -c^2\Delta u = M_\beta u\;.
\] 
Since we will need to scale our basic estimate (\ref{1i}) to different tangential Sobolev levels, it will be convenient to introduce semigeodesic local coordinates. Localizing the function, we may assume that $\Omega_\infty = \{ x_d >0\}$ and $\Sigma_\infty = \{x_d = 0\}$. The spatially tangential variables are denoted by $y=(x_1,...,x_{d-1})$ and we define a family of tangential operators 
\[
 \Lambda^s w = \frac{1}{(2\pi)^d} \int_{\R^d} e^{{\rm i}[\tau t + \langle y,\eta\rangle]} (\tau^2+\beta^2+|\eta|^2)^{s/2}\hat{w}(\tau,\eta,x_d) \,d\tau d\eta \;, \quad \mbox{ for } s\in \R,
\]
where $\hat{w}$ denotes the Fourier transform in the tangential variables $(t,y)$. 
 This estimate can be scaled to other Sobolev norms with respect to the tangential variables. Replacing $u$ by $\Lambda^s u$, we have 
\begin{equation}\label{4}
 \beta\| \Lambda^s (\beta u,u_t)\|^2_{1,\beta,Q_\infty} +
 \|\Lambda^s (\beta\partial_d u,\partial_d u_t)\|^2_{1,\beta,\Sigma_\infty} \lesssim \frac{1}{\beta} \|\Lambda^s M_\beta u\|^2_{Q_\infty} + \|\Lambda^s (\beta u,u_t)\|^2_{1,\beta,\Sigma_\infty}\;.
\end{equation}
for any $s\in \R$ and $u\in H_{(3,s)}(Q_\infty)$ \cite[Lemma 3.3.7]{sakamoto82} and sufficiently large $\beta$.  The anisotropic Sobolev space $H_{(m,s)}(Q_\infty)$  has been introduced in \cite[Appendix B2]{hormander85}.

In semi-geodesic coordinates, the Laplacian will transfer to the Riemann Laplacian
\[
 \Delta_a = \frac{1}{\sqrt{\det a(x)}}\sum_{j=1}^{d}\partial_j\sqrt{\det a(x)} a^{jk}(x)\partial_{k}\;,
\]
where $a^{jd}=\delta_{jd}$, $a^{jk}=a^{kj}$ for $j,k=1,...,d$ are smooth functions. Hence, the changes in the operator $M$ are such that the Laplacian $\Delta$ is replaced by the Riemann Laplacian $\Delta_a$. The estimates (\ref{2}) and (\ref{4}) are then established at first locally and then combined by means of a partition of unity. 

For future reference we point out that the Riemann Laplacian can be written as
\begin{equation}\label{8}
 \Delta_a = \frac{1}{\sqrt{\det a}}\partial_d \sqrt{\det a} \partial_d+ \Delta'_a
\end{equation}
where $\Delta'_a$ is the Laplace-Beltrami operator on the surfaces $\{x_d = c\}$ for small positive $c$.

From the estimate (\ref{4}) we can derive an existence statement for the boundary problem
\begin{equation}\label{5}
 M_\beta u = e^{-\beta t}f  \mbox{ in } Q_\infty,\quad  u\big|_{Q_\infty} =e^{-\beta t}g  \mbox{ in } \Sigma_\infty
\end{equation}
by means of duality.
For $u,z\in H^3(Q_\infty)$ one obtains using integration by parts the identity
\[
 ( M_\beta u,z )_{Q_\infty} = (u,M^*_\beta z) _{Q_\infty} + \langle u, c^2 \partial_\nu z-b\partial_\nu z_t \rangle_{\Sigma_\infty} +\langle c^2\partial_\nu u +b\partial_\nu w_t, z\rangle_{\Sigma_\infty} \;.
\]
Here $(\cdot,\cdot)_{Q_\infty}$ is the scalar product in $L^2(Q_\infty)$ and $\langle\cdot,\cdot\rangle_{\Sigma_\infty}$ denotes the scalar product in $L^2(\Sigma_\infty)$.

The resolvent estimate for the adjoint operator is 
\[
 \beta\| (\beta z,z_t)\|^2_{1,\beta,Q_\infty} + 
 \|(\beta \partial_\nu z,\partial_\nu z_t)\|^2_{\Sigma_\infty} \\ \lesssim \frac{1}{\beta} \|M^*_\beta z\|^2_{Q_\infty} +
 \|(\beta z,z_t)\|^2_{2,\beta,\Sigma_\infty}\;.
\]
for all $z\in H^3(Q_\infty)$ and $\beta \ge \beta_1>0$. This estimate is established in a similar fashion as (\ref{1i}) since the two operators $M^*_\beta$ and $-M_{-\beta}$ have the same principal part. As the estimate for the primal problem, this one can be also scaled to other Sobolev norms in the tangential variables. For $s\in \R$ and $z\in H_{(3,s)}(Q_\infty)$, we have, for $\beta$ sufficiently large, 
\begin{equation}\label{6}
 \beta\| \Lambda^s(\beta z,z_t)\|^2_{1,\beta,Q_\infty}+ 
 \|\Lambda^s(\beta \partial_\nu z,\partial_\nu z_t)\|^2_{\Sigma_\infty} \lesssim \frac{1}{\beta} \|\Lambda^s M^*_\beta z\|^2_{Q_\infty} 
 + \|\Lambda^s(\beta  z,z_t)\|^2_{1,\beta,\Sigma_\infty}\;.
\end{equation}
This estimate is crucial when proving the existence of solutions to the boundary problem (\ref{5}), \cite[Theorem 3.4(b)]{sakamoto82}.
 

\begin{proposition}\label{p:matthias-best}
Let $s$ be a non-negative integer and suppose that $\beta$ is sufficiently large that the two estimates (\ref{4}) and (\ref{6}) hold. For $e^{-\beta t} f\in H^s(Q_\infty)$ and $e^{-\beta t} (g,g_t) \in H^{s+1}(\Sigma_\infty)$, the boundary value problem (\ref{5}) has a unique weak solution $u\in H^{1+s}(Q_\infty)$.
This solution is the strong limit of functions which are smooth. It has the additional regularity properties $u_t \in H^{s+1}(Q_\infty)$ and $\partial_\nu  u,\partial_\nu u_t\in H^{s}(\Sigma_\infty)$, and the estimate
\[
 \beta \|(\beta u,u_t)\|^{2}_{1+s,\beta,Q_\infty} + \|(\beta\partial_\nu u,\partial_\nu u_t)\|^2_{s,\beta,\Sigma_\infty} \lesssim
  \frac{1}{\beta}\|e^{-\beta t}f\|^2_{s,\beta,Q_\infty} + \|e^{-\beta t}(\beta g,g_t)\|^2_{1+s,\beta,\Sigma_\infty}
\]
holds.
\end{proposition}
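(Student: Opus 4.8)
The plan is to follow the classical scheme of Sakamoto \cite{sakamoto82}: the scaled a priori estimate \eqref{4} will deliver uniqueness together with the quantitative bound, the adjoint estimate \eqref{6} will produce a weak solution by duality, and the interior and trace regularity will be recovered by promoting tangential smoothness to normal smoothness through the scalar equation, followed by a regularization argument. Uniqueness and the estimate come out simultaneously: for a smooth $u$ solving \eqref{5}, insert it into \eqref{4} with the prescribed non-negative integer $s$. On the right-hand side $\Lambda^s M_\beta u=\Lambda^s e^{-\beta t}f$ is bounded by $\|e^{-\beta t}f\|_{s,\beta,Q_\infty}$, while the trace term $\|\Lambda^s u\|_{2,\beta,\Sigma_\infty}$ equals $\|\Lambda^s e^{-\beta t}g\|_{2,\beta,\Sigma_\infty}$ since $u\big|_{\Sigma_\infty}=e^{-\beta t}g$. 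This reproduces exactly the asserted estimate, and applied to homogeneous data it forces $u\equiv 0$.

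For existence I would run the duality argument on \eqref{6}. Admissible test functions are $z\in H^3(Q_\infty)$ subject to the homogeneous adjoint Dirichlet condition $z\big|_{\Sigma_\infty}=0$, for which the integration-by-parts identity recorded above collapses to $(M_\beta u,z)_{Q_\infty}=(u,M^*_\beta z)_{Q_\infty}+\langle e^{-\beta t}g,\,c^2\partial_\nu z-b\partial_\nu z_t\rangle_{\Sigma_\infty}$, so that only the prescribed datum $g$ survives on the boundary. I would then define, on the range of $M^*_\beta$, the linear functional
\[
\ell(M^*_\beta z):=(e^{-\beta t}f,z)_{Q_\infty}-\langle e^{-\beta t}g,\,c^2\partial_\nu z-b\partial_\nu z_t\rangle_{\Sigma_\infty}\,,
\]
and use \eqref{6} — which, since the $\Sigma_\infty$ term on its right-hand side drops out for $z\big|_{\Sigma_\infty}=0$, controls $\|\Lambda^s z\|_{2,\beta,Q_\infty}$ and $\|\Lambda^s\partial_\nu z\|_{1,\beta,\Sigma_\infty}$ by $\beta^{-1}\|\Lambda^s M^*_\beta z\|_{Q_\infty}$ — to bound $\ell$ by a weighted norm of $\Lambda^s M^*_\beta z$. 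A Hahn--Banach extension followed by the Riesz representation theorem then furnishes $u$ with $(u,M^*_\beta z)_{Q_\infty}=\ell(M^*_\beta z)$ for every admissible $z$, which is precisely the weak form of \eqref{5}.

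It remains to upgrade this weak solution to the claimed regularity and to exhibit it as a strong limit of smooth functions. The tangential content of \eqref{4} already controls $\|\Lambda^s u\|_{2,\beta,Q_\infty}$ and, through its boundary term $\|\Lambda^s\partial_d u\|_{1,\beta,\Sigma_\infty}$, the trace bound $\partial_\nu u\in H^{1+s}(\Sigma_\infty)$ directly. The $s$ normal derivatives still missing from $u\in H^{2+s}(Q_\infty)$ are recovered from the equation itself: since $M_\beta$ carries no normal derivative of order higher than two and the coefficient of $\partial_d^2 u$ is the tangential operator $c^2+b(\partial_t+\beta)$, whose symbol $c^2+b(\beta+\mathrm{i}\tau)$ has modulus at least $c^2+b\beta$ and is therefore boundedly invertible, one may solve for $\partial_d^2 u=v_3$ in terms of purely tangential derivatives of $u$ and of $f$; differentiating this identity normally and iterating converts each additional normal derivative into tangential ones already under control, yielding $u\in H^{2+s}(Q_\infty)$. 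Finally, approximating $(f,g)$ by smooth data, solving the regularized problems by the available $C^\infty$ theory, applying the uniform estimate just proved and passing to the limit identifies $u$ as the strong limit of smooth solutions.

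The main obstacle is precisely this last regularity step at the characteristic boundary: because $A^d$ is singular, the estimate \eqref{2} controls no trace of the component $v_3=\partial_d^2 u$, so normal smoothness cannot be read off from the first-order system and must instead be bootstrapped from the scalar third-order equation via the invertibility of $c^2+b(\partial_t+\beta)$ for large $\beta$ — exactly the point at which one reverts to Sakamoto's treatment of the equation rather than of the system.
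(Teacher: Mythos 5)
Your proposal is correct and follows essentially the same route as the paper: existence by duality from the adjoint estimate \eqref{6}, uniqueness and the quantitative bound from \eqref{4}, normal regularity recovered from the scalar third-order equation by inverting the tangential factor $c^2+b(\partial_t+\beta)$ (the paper realizes this inverse as the causal integral $\tfrac1b\int_{-\infty}^t e^{-(c^2/b+\beta)(t-s)}h(s)\,ds$, which is exactly your bounded Fourier multiplier), followed by approximation by smooth solutions. The only loose point is the claim that \eqref{4} alone ``reproduces exactly the asserted estimate'': for $s\ge 1$ it controls only the anisotropic norm $\|\Lambda^s u\|_{2,\beta,Q_\infty}$ rather than $\|u\|_{2+s,\beta,Q_\infty}$, so the stated estimate is reached only after your normal-derivative bootstrap upgrades \eqref{4} to the paper's \eqref{9} --- a step you do in fact supply.
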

\begin{proof}
 In the first step we prove the existence of a weak solution using a duality argument. For that we define
 \[
  Y = \left\{ z\in H_{(3,-s-1)}(Q_\infty)\;:\; z\Big|_{\Sigma_\infty}=0\right\} 
 \]
 and $Z = \{ M_\beta^*z\,:\, z\in Y\}$. Note that $Z$ is a subspace of $H_{(0,-s-1)}(Q_\infty)$. Consider the linear functional  defined by 
 \[
  l(z) = (z,e^{-\beta t}f)_{Q_\infty} - c^2 \langle \partial_d z,e^{-\beta t}g \rangle_{\Sigma_\infty} + b \langle \partial_d z_t, e^{-\beta t}g \rangle_{\Sigma_\infty}\;.
 \]
 This linear functional is bounded on $Z$ since 
 \[
  \begin{split}
  |l(z)|&\lesssim \|e^{-\beta t}f\|_{s,\beta,Q_\infty} \|\Lambda^{-s}z\|_{Q_\infty} + \|e^{-\beta t} g\|_{s+1,\beta,\Sigma_\infty}
   \|\left[\|\Lambda^{-s-1}\partial_d z_t\|+\|\Lambda^{-s-1}\partial_d z\|\right]\\
   &\lesssim \left[ \|e^{-\beta t}f\|^2_{s,\beta,Q_\infty}+\|e^{-\beta t} g\|_{s+1,\beta,\Sigma_\infty}\right] \| \Lambda^{-s-1}M_\beta z\|\;,
  \end{split}
 \]  
 where we applied formula (\ref{6}) with $s$ replaced by $-s-1$. Applying the Theorems by Hahn-Banach and by Riesz, there exists an element $u\in H_{(0,s+1)}(Q_\infty)$ such that  $l(z) = (M_\beta z,u)_{Q_\infty}$, that is 
\[
   (M_\beta z,u)_{Q_\infty}=(z,e^{-\beta t}f)_{Q_\infty} - c^2 \langle \partial_d z,e^{-\beta t}g \rangle_{\Sigma_\infty} + b \langle \partial_d z_t, e^{-\beta t}g \rangle_{\Sigma_\infty}
\]
for all $z\in Y$. Hence, $u$ is a weak solution to the boundary value problem. 
 
In the second step, the regularity in the direction normal to the boundary is established. From equation $M_\beta u =e^{-\beta t}f$ and the Laplacian in local coordinates (\ref{8}) we know that 
\begin{multline}\label{7}
\frac{c^2+\beta b}{\sqrt{\det{a}}}\partial_d(\sqrt{\det a}\partial_d u) + \frac{b}{\sqrt{\det a}} \partial_d(\sqrt{\det a}\partial_{d} u_t) \\ =(\partial_t +\beta)^3 u+\alpha (\partial_t +\beta)^2u - (c^2-b \beta )\Delta'_a u - b\Delta'_a u_t - e^{-\beta t} f\;.
 \end{multline}
This equation can be considered as a ordinary differential equation for the unknown 
$\partial_d(\sqrt{\det g}\partial_d )u$ with respect to $t$. In the following we abbreviate the right-hand side in this equation by $h$ and note that $h \in H_{(0,s-2)}(\R^{d+1}_+)$. Integrating in time gives 
 \[
   \frac{1}{\sqrt{\det a}}\partial_d(\sqrt{\det a}\partial_d u)(t,x) =\frac{1}{b} \int_{-\infty}^t  e^{-(c^2/b+\beta)(t-s)} h(s,x)\,ds 
 \] 
 and hence  $\partial_d(\sqrt{\det a}\partial_d u) \in H_{(0,s-1)}(\R^{d+1}_+)$ since $h$ has time derivatives of order three but space derivatives of order two. Now note that the identity
 \[
  \frac{1}{(\det a)^{1/4}}\partial_d^2 \left[(\det a)^{1/4} u\right] = \frac{1}{\sqrt{\det a}}\partial_d\left[\sqrt{\det a}\partial_d u\right] + u \frac{\partial_d^2 (\det a)^{1/4}}{(\det a)^{1/4}}
 \]  
 implies $(\det a)^{1/4} u \in H_{(2,s-1)}(\R^{d+1}_+)$ and thus $u\in H_{(2,s-1)}(\R^{d+1}_+)$, because of the smoothness of $\det a$. 
 
 This process can be repeated and results in $u \in H^{s+1}(Q_\infty)$. Furthermore, derivatives in normal direction can be estimated in terms of tangential derivatives and the forcing term. These facts can be combined into an improvement of the estimate (\ref{4}). For $u \in H^{s+3}(Q_\infty)$ we have 
\begin{equation}\label{9}
 \beta \|(\beta u,u_t)\|^{2}_{1+s,\beta,Q_\infty} + \|(\beta \partial_\nu u,\partial_\nu u_t\|^2_{1+s,\beta,\Sigma\infty} \lesssim
  \frac{1}{\beta}\|M_\beta u\|^2_{s,\beta,Q_\infty} + \|(\beta u,u_t)\|^2_{1+s,\beta,\Sigma_\infty}
\end{equation}
Finally, we prove that the solution $u \in H^{s+1}(Q_\infty)$ satisfies $u_t\in H^{s+1}(Q_\infty)$,  $\partial_\nu  u,\partial_\nu u_t\in H^{s}(\Sigma_\infty)$, and the estimate stated in the proposition. Approximating the forcing term and the boundary data by functions in $H^{s+1}(Q_\infty)$ and $H^{3+s}(\Sigma)$ we obtain a sequence of solutions in $u^{(n)} \subset H^{2+s}(Q_\infty)$. Using estimate (\ref{9}) on $u^{(n)} - u^{(m)}$ shows that this sequence $u^{(n)}$ and $u_t^{(n)}$ are Cauchy in $H^{s+1}(Q_\infty)$, the sequence  is Cauchy  and that the sequences $\partial_\nu u^{(n)}$ and $\partial_\nu u_t^{(n)}$ are Cauchy in $H^{s}(\Sigma)$. The limit function $u\in H^{1+s}(Q_\infty)$ is then the strong solution and satisfies the estimate.
\end{proof}
Returning to the original variable $w$ we have solved the boundary problem
$Mw=f$ in $Q_\infty$, $u=g$ in $\Sigma_\infty$ whose solution satisfies the estimate
\[
 \beta \|e^{-\beta t}(\beta w,w_t)\|^2_{1+s,\beta,Q_\infty} + \|e^{-\beta t}(\beta \partial_\nu w,\partial_\nu w_t)\|^2_{\Sigma_\infty} \lesssim
  \frac{1}{\beta}\|e^{-\beta t}f\|^2_{s,Q_\infty} + \|e^{-\beta t}(\beta g,g_t)\|^2_{1+s,\beta,\Sigma_\infty\;.}
\]
This estimate can be used to solve the initial-boundary value problem with homogeneous initial data. If $f$ and $g$ both vanish for $t<0$, then $w$ has to vanish for $t<0$ as well, see \cite[Lemma 3.4.4]{sakamoto82}.


Now we discuss non-homogeneous initial data. For $s$, a non-negative integer, and $w \in H^{3+s}(Q)$
one has the {\it semigroup estimate} 
\begin{multline}\label{10}
 \|e^{-\beta T}w(T)\|^2_{1+s,\Omega} + \|e^{-\beta T}w_t(T)\|^2_{1+s,\Omega} + \|e^{-\beta T}w_{tt}(T)\|^2_{s,\Omega} \\+
 \|e^{-\beta t} (w,w_t)\|^2_{1+s,\beta,Q} + \|e^{-\beta t}(\partial_\nu w,\partial_\nu w_t)\|^2_{s,\Sigma}\\  \lesssim \|e^{-\beta t}Mw\|^2_{s,Q} + \|e^{-\beta t}(w,w_t)\|^2_{1+s,\Sigma}+ \|w(0)\|^2_{1+s,\Omega} + \|w_t(0)\|^2_{1+s,\Omega} + \|w_{tt}(0)\|^2_{s,\Omega}\;.
\end{multline}
This semigroup estimate is proved as in Sakamoto's book \cite[Section 3.5]{sakamoto82}. The only difference is that our starting point is not the energy integral 
\[
 \Re \int_Q e^{-2\beta t}Mw(2\partial_t^2-b\Delta) \overline{w} \,dtdx \quad \mbox{ but rather } \quad \Re \int_Q e^{-2\beta t}Mw(2\partial_t^2+\beta^2 b) \overline{w}\,dtdx\;.
\]
This is due to the fact that our resolvent estimate does not have second-order space derivatives on the left-hand side. 
The semigroup estimate plays a crucial role in the proof of the following result which includes part (b) of Theorem \ref{t:main} in the case $s=0$. In order to talk about solutions to the initial-boundary value problem (\ref{e:ibvp-mgt_1}) of higher regularity, we need to discuss compatibility conditions. 

If $f\in H^s(Q)$, $s\ge 1$ then the differential equation $Mw=f$ can be used to recover the initial values of higher-order time derivatives $\partial_t^{l+2} w(0)$, $l=1,2,...,s$. If $\partial_t^{l} g(0) = \partial_t^{l} w(0)$ on $\Gamma$ for $l=0,1,...,s+2$, then the data $f$, $g$, $w^0$, $w^1$, $w^2$ satisfy the compatibility condition of order $s$.
The compatibility conditions of order $s=0$ are the ones stated in Theorem \ref{t:main}. 
\begin{proposition}
 Let $s$ be a non-negative integer.
 Consider the initial-boundary value problem (\ref{e:ibvp-mgt_1}) with $f\in H^s(Q)$, $g,g_t\in H^{s+1}(\Sigma)$, and $w_0, w_1\in H^{s+1}(\Omega)$, $w_2 \in H^s(\Omega)$ satisfying the compatibility conditions of order $s$. There exists a unique solution $(w,w_t)\in C([0,T], H^{s+1}(\Omega)\times H^{s+1}(\Omega))$ with additional trace regularity $\partial_\nu w, \partial_\nu w_t \in H^{s}(\Sigma)$ and the estimate
\begin{multline*}
 \|(w(T),w_t(T))\|^2_{1+s,\Omega} + \|w_{tt}(T)\|^2_{s,\Omega} +
 \|(w,w_t)\|^2_{1+s,Q} + \|(\partial_\nu w, \partial_\nu w_t)\|^2_{1+s,\Sigma} \\ \lesssim  \|f\|^2_{s,Q} + \|(g,g_t)\|^2_{1+s,\Sigma}+ \|(w_0,w_1)\|^2_{1+s,\Omega} + \|w_2\|^2_{s,\Omega}\;,
\end{multline*}
holds.  
\end{proposition}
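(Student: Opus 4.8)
The plan is to reduce the problem with non-homogeneous initial data to one with homogeneous initial data, for which existence has already been secured in Proposition~\ref{p:matthias-best}, and then to read off the asserted a~priori bound from the semigroup estimate \eqref{10}. First I would construct a lifting $\Phi$ of the Cauchy data $(w_0,w_1,w_2)$, that is a function with $\Phi(0)=w_0$, $\Phi_t(0)=w_1$, $\Phi_{tt}(0)=w_2$, chosen in the anisotropic Sobolev class in which the solution is expected to live, so that $\partial_t^j\Phi(0)\in H^{s+2-j}(\Omega)$ for every $j$, the values for $j\ge 3$ being dictated by $M\Phi=f$ and the data. The point of working in this anisotropic scale is that the naive polynomial lifting $w_0+tw_1+\tfrac{t^2}{2}w_2$ fails, because $M$ involves $\Delta w_1$ and $\Delta w_2$, which are too rough; the lifting must therefore trade spatial for temporal regularity so that $M\Phi\in H^s(Q)$.

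Setting $\tilde w:=w-\Phi$, the new unknown solves $M\tilde w=f-M\Phi=:\tilde f$ with homogeneous initial data $\tilde w(0)=\tilde w_t(0)=\tilde w_{tt}(0)=0$ and boundary datum $\tilde w|_\Sigma=g-\Phi|_\Sigma=:\tilde g$. Here the compatibility conditions of order $s$ enter decisively: they are precisely what guarantees that $\tilde f$ and $\tilde g$ vanish to sufficiently high order at $t=0$, so that their extensions by zero to $t<0$ belong to $H^s(Q_\infty)$ and $H^{s+2}(\Sigma_\infty)$, respectively. Proposition~\ref{p:matthias-best} then yields a unique $\tilde w\in H^{s+2}(Q_\infty)$ with $\partial_\nu\tilde w\in H^{1+s}(\Sigma_\infty)$, which vanishes for $t<0$ by the causality remark recorded after that proposition. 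Restricting to $(0,T)$ and putting $w:=\tilde w+\Phi$ produces a solution with the claimed interior and trace regularity; extending the data arbitrarily beyond $t=T$ does not affect the solution on $(0,T)$, again by causality (finite domain of dependence in time).

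It remains to prove uniqueness and the stated estimate. Uniqueness is immediate once the estimate is in force, since a solution with vanishing data must vanish. For the estimate I would invoke the semigroup estimate \eqref{10}: using $Mw=f$ and, on the lateral boundary, $w|_\Sigma=g$ (so that $\|w\|_{2+s,\Sigma}=\|g\|_{2+s,\Sigma}$), the right-hand side of \eqref{10} becomes exactly $\|f\|^2_{s,Q}+\|g\|^2_{2+s,\Sigma}+\|w_0\|^2_{2+s,\Omega}+\|w_1\|^2_{1+s,\Omega}+\|w_2\|^2_{s,\Omega}$, and on the finite cylinder the weighted norm $\|\cdot\|_{2+s,\beta,Q}$ is equivalent to $\|\cdot\|_{2+s,Q}$ for fixed $\beta$. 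The case $s=0$ then delivers part~(b) of Theorem~\ref{t:main}.

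The main obstacle is twofold, and both difficulties are technical rather than conceptual. First, the lifting $\Phi$ must be built in the correct anisotropic Sobolev space and the order-$s$ compatibility conditions must be matched exactly, so that the zero-extensions of $\tilde f$ and $\tilde g$ have the regularity required by Proposition~\ref{p:matthias-best}. Second, the semigroup estimate \eqref{10} is only available for $w\in H^{3+s}(Q)$, whereas the solution just constructed is merely $H^{2+s}(Q)$; closing this gap requires a density argument, approximating the data by smoother data satisfying the compatibility conditions, applying \eqref{10} to the resulting smooth solutions and to their differences, and passing to the limit --- exactly the approximation scheme already used in the proof of Proposition~\ref{p:matthias-best}.
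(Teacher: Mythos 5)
Your proposal and the paper's proof share the same skeleton: subtract a lifting of the Cauchy data to reduce to the homogeneous-initial-data problem solved in Proposition~\ref{p:matthias-best} (with causality handling the extension to $t<0$), and then obtain the a priori bound from the semigroup estimate \eqref{10} by a density argument. The difference is the order of operations, and it matters. You propose to build the lifting $\Phi$ directly at the level of the rough data, in an anisotropic class chosen so that $M\Phi\in H^s(Q)$; you correctly observe that a generic $H^{s+2}(Q)$ lifting will not do, since $\Delta\Phi_t$ then lands only in $H^{s-1}(Q)$, but you never construct the required $\Phi$. That is the hardest step of your route: one needs an approximate solution near $t=0$ adapted to the operator $M$ (third order in $t$, with dominant spatial term $b\,\Delta\partial_t$), compatible with the order-$s$ compatibility conditions, and its existence is exactly the kind of technical claim that requires a proof rather than an assertion. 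The paper sidesteps the issue entirely: it first approximates $f,g,w_0,w_1,w_2$ by smooth data satisfying the compatibility conditions of infinite order, takes a $C^\infty(\overline{Q})$ lifting $w^{(n)}_{II}$ of the smooth initial data (trivial to construct, and automatically giving $Mw^{(n)}_{II}$ smooth), solves for $w^{(n)}_I$ with homogeneous initial data via Proposition~\ref{p:matthias-best}, and then applies \eqref{10} to the differences $w^{(n)}-w^{(m)}$ to obtain a Cauchy sequence in $H^{s+2}(Q)$ whose limit is the solution; the stated estimate follows from \eqref{10} applied to $w^{(n)}$ in the limit. Your own closing paragraph concedes that this approximation scheme is needed anyway, because \eqref{10} is only available for $H^{s+3}(Q)$ functions --- and once you commit to it, the rough lifting $\Phi$ becomes superfluous. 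So your plan is sound, but essentially because its fallback \emph{is} the paper's argument; as written, the first half of your construction rests on an object whose existence you have not established.
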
 
\begin{proof}
 The forcing term $f$, the Dirichlet data $g$ and the initial data $w_0,w_1,w_2$ can be all approximated by sequences of functions $f^{(n)} \in H^{s+2}(Q)$, $g^{(n)}\in H^{s+4}(\Sigma)$, $w^{(n)}_0 \in H^{s+9/2}$, $w^{(n)}_1 \in H^{s+7/2}(\Omega)$, $w_2^{(n)}\in H^{s+5/2}(\Omega)$ and satisfy the compatibility conditions of order $s+2$ \cite[Lemma 3.3]{rm74}. By the trace theorem in Sobolev spaces, there exists a sequence $w^{(n)}_{II}\in H^{s+5}(Q)$ such that 
\[
  \partial_t^j w^{(n)}_{II} (0) = w_j^{(n)}(0)\quad \mbox{ for } j=0,1,2.
\]
Let $w^{(n)}_I$ be the solution to the initial-boudary value problem
\[
 Mw=f^{(n)}-Mw^{(n)}_{II}  \mbox{ in } Q\,, \quad w= g^{(n)}-w^{(n)}_{II} \mbox{ in } \Sigma\,, \quad w(0)=w_t(0)=w_{tt}(0)=0\;.
\]
According to Proposition \ref{p:matthias-best} and the discussion right after its proof, this problem has a unique solution $w_I^{(n)} \in H^{s+3}(Q)$. Consequently, the function $w^{(n)}:=w^{(n)}_I + w^{(n)}_{II}$ satisfies the initial-boundary value problem
\[
 Mw=f^{(n)} \mbox{ in } Q, \quad w=g^{(n)} \mbox{ in } \Sigma, \quad w(0)=w_0^{(n)}, w_t(0)=w_1^{(n)}, w_{tt}(0)=w_2^{(n)} \mbox{ in } \Omega
\] 
Using the semigroup estimate (\ref{10}), the difference $w^{(n)}-w^{(m)}$ is shown to be Cauchy in $H^{s+1}(Q)$. Its limit $w\in H^{s+1}(Q)$ is then the solution to the  initial-boundary value problem (\ref{e:ibvp-mgt_1}) and the estimate and the additional regularity statements follow from (\ref{10}) applied to $w^{(n)}$ and taking the limit.
\end{proof}
\begin{remark}
\begin{rm}
 Once we consider a finite interval the exponential term $e^{-\beta t}$ as well as the Sobolev norms with the parameter $\beta$ become unnecessary. Hence, we decided to formulate the estimate in the last Proposition without them. In the infinite time interval is considered, one has to use weighted norms and the exponential term. 
\end{rm}
\end{remark}

\section*{Acknowledgements}
A major portion of
this 
research was conducted while Matthias Eller was the guest of Francesca Bucci at the Universit\`a degli Studi di Firenze (UniFI). 
Eller's stay was supported by grants from the Gruppo Nazionale per l'Analisi Matematica, la Probabilit\`a e le sue Applicazioni (GNAMPA) of the Istituto Nazionale di Alta Matematica, within the programme ``Professori visitatori'', as well as from the ``Internazionalizzazione'' plan of UniFI, which both authors gratefully acknowledge. 
Matthias Eller wishes to thank the Dipartimento di Matematica e Informatica, UniFI,
for its hospitality.
Francesca Bucci is a member of the GNAMPA and participant to the 2019 GNAMPA Project
``Controllability of PDEs in physics models and applied sciences'', whose partial support is also acknowledged. 

\smallskip
Finally, we wish to thank the anonymous referee who made important suggestions and comments which helped us to improve the manuscript.  

\bibliography{article}

\begin{thebibliography}{99}
\bibitem{alves-etal_2018}
{\sc M.O.~Alves, A.H.~Caixeta, M.A.~Jorge Silva, J.H.~Rodrigues},
Moore-Gibson-Thompson equation with memory in a history framework: a semigroup approach,
{\em Z.~Angew. Math. Phys.} {\bf 69} (2018), no.~4, Art.~106, 19 pp. 
%
{\tt https://doi/org/10.1007/s00033-018-0999-5}

\bibitem{bucci-pan-jee_2019}
{\sc F.~Bucci, L.~Pandolfi},
On the regularity of solutions to the Moore-Gibson-Thompson equation: a perspective via wave equations with memory, 
{\em J.~Evol.~Equ.} {\bf 20} (2020), 837-867 (published online: 19 Nov. 2019).
%
{\tt https://doi.org/10.1007/s00028-019-00549-x}

\bibitem{bucci-las_2019}
{\sc F.~Bucci, I.~Lasiecka},
Feedback control of the acoustic pressure in ultrasonic wave propagation,
{\em Optimization} {\bf 68} (2019), no.~10, 1811-1854 
(published online:~19 Aug 2018).
%
{\tt https://doi.org/10.1080/02331934.2018.1504051} 

\bibitem{caixeta-etal_2016}
{\sc A.E.~Caixeta, I.~Lasiecka, V.N.~Domingos Cavalcanti}, 
On long time behavior of Moore-Gibson-Thompson equation with molecular relaxation,
{\em Evol.~Equ.~Control~Theory 5} (2016), no.~4, 661-676. 

\bibitem{caixeta-etal-attractors_2016}
{\sc A.E.~Caixeta, I.~Lasiecka, V.N.~Domingos Cavalcanti},
Global attractors for a third order in time nonlinear dynamics,
{\em J.~Differential Equations} {\bf 261} (2016), no.~1, 113-147. 

\bibitem{cattaneo_1948}
{\sc C.~Cattaneo},
Sulla conduzione del calore, 
{\em Atti Del Seminar. Mat. Fis. Univ. Modena} {\bf 3} (1948).

\bibitem{cattaneo_1958}
{\sc C.~Cattaneo},
A form of heat conduction equation which eliminates the paradox of instantaneous
propagation, 
{\em Compt. Rend. Acad. Sci.} {\bf 247} (1958), 431-433.

\bibitem{chaz-piriou_81}
{\sc J.~Chazarain, A.~Piriou},
{\em Introduction \`a la th\'eorie des \'equations aux d\'eriv\'ees partielles 
lin\'eaires} (French) [Introduction to the theory of linear partial differential equations], 
Gauthier-Villars, Paris, 1981. vii+466 pp. 

\bibitem{chen-palmieri-dcds_2020}
{\sc W.~Chen, A.~Palmieri},
Nonexistence of global solutions for the semilinear Moore-Gibson-Thompson equation in the conservative case,
{\em Discrete Contin. Dyn. Syst.} {\bf 40} (2020), no.~9, 5513-5540.

\bibitem{conejero-etal_2015}
{\sc J.A.~Conejero, C.~Lizama, F.~Rodenas},
Chaotic behaviour of the solutions of the Moore-Gibson-Thompson equation,
{\em Appl. Math. Inf. Sci.} {\bf 9} (2015), no.~5, 2233-2238. 

\bibitem{corduneanu}
{\sc C.~Corduneanu},
{\em Integral Equations and Applications},
Cambridge University Press, 2010.

\bibitem{dekkers-rozanova_hal2019}
{\sc A.~Dekkers, A.~Rozanova-Pierrat},
Models of nonlinear acoustics viewed as an approximation of the Navier-Stokes and
Euler compressible isentropic systems,
{\em Commun. Math. Sci.} {\bf 18} (2020), no.~8, 2075-2119. 

\bibitem{delloro-las-pata_2016}
{\sc F.~Dell'Oro, I.~Lasiecka, V.~Pata},
The Moore-Gibson-Thompson equation with memory in the critical case,
{\em J. Differential Equations} {\bf 261} (2016), no.~7, 4188-4222. 

\bibitem{delloro-las-pata_2019}
{\sc F.~Dell'Oro, I.~Lasiecka, V.~Pata},
A note on the Moore-Gibson-Thompson equation with memory of type II,
{\em J.~Evol.~Equ.} {\bf 20} (2020), no.~4, 1251-1268.

\bibitem{delloro-pata_2017} 
{\sc F.~Dell'Oro, V.~Pata},
On the Moore-Gibson-Thompson equation and its relation to linear viscoelasticity,
{\em Appl.~Math.~Optim.} {\bf 76} (2017), no. 3, 641-655. 

\bibitem{enflo-hedberg-nla_2006}
{\sc B.O.~Enflo, C.M.~Hedberg},
{\em Theory of Nonlinear Acoustics in Fluids, Fluid Mechanics and Its Applications}, Springer Netherlands, 2006. 

\bibitem{fattorini_1985}
{\sc H.O.~Fattorini},
{\em Second order linear differential equations in Banach spaces},
North-Holland Publishing Co., Amsterdam, 1985.

\bibitem{hormander83}
{\sc L.~H{\"o}rmander},
{\em The analysis of linear partial differential operators. II. Differential operators with constant coefficients},
Grundlehren der Mathematischen Wissenschaften [Fundamental Principles of Mathematical Sciences], 257,
Springer-Verlag, Berlin, 1983. viii+391 pp. 

\bibitem{hormander85}
{\sc L.~H{\"o}rmander},
{\em The analysis of linear partial differential operators, III. Pseudodifferential operators},
Grundlehren der Mathematischen Wissenschaften [Fundamental Principles of Mathematical Sciences], 274,
Springer-Verlag, Berlin, 1985. viii+525 pp.

\bibitem{jordan_2008}
{\sc P.M.~Jordan},
Nonlinear acoustic phenomena in viscous thermally relaxing fluids: Shock bifurcation
and the emergence of diffusive solitons, 
{\em The Journal of the Acoustical Society of America} {\bf 124} (2008), no.~4, 2491.

\bibitem{jordan_2014}
{\sc P.M.~Jordan},
Second-sound phenomena in inviscid, thermally relaxing gases, 
{\em Discrete Contin. Dyn. Syst. Ser. B} {\bf 19} (2014), no.~7, 2189-2205.

\bibitem{kalten-review_2015}
{\sc B.~Kaltenbacher},
Mathematics of nonlinear acoustics, 
{\em Evol. Equ. Control Theory} {\bf 4} (2015), no.~4, 447-491.

\bibitem{kalten-etal_2011}
{\sc B.~Kaltenbacher, I.~Lasiecka, R.~Marchand},
Wellposedness and exponential decay rates for the Moore-Gibson-Thompson equation arising in high intensity ultrasound,
{\em Control and Cybernet.} {\bf 40} (2011), no.~4, 971-988. 

\bibitem{kalt-las-posp_2012}
{\sc B.~Kaltenbacher, I.~Lasiecka, M.K.~Pospieszalska},
Wellposedness and exponential decay of the energy in the nonlinear Moore-Gibson-Thompson equation arising in high intensity ultrasound,
{\em Math. Models Methods Appl. Sci.} {\bf 22} (2012), no.~11, 1250035, 34 pp. 

\bibitem{kalten-nikolic_2019}
{\sc B.~Kaltenbacher, V.~Nikolic},
On the Jordan-Moore-Gibson-Thompson equation: well-posedness with quadratic gradient nonlinearity and singular limit for vanishing relaxation time,
{\em Math. Models Methods Appl. Sci.} {\bf 29} (2019), no.~13, 2523-2556. 

\bibitem{kalten-nikolic_arxiv2019}
{\sc B.~Kaltenbacher, V.~Nikolic},
Vanishing relaxation time limit of the Jordan-Moore-Gibson-Thompson wave equation with Neumann and absorbing boundary conditions,
{\em Pure Appl. Funct. Anal.} {\bf 5} (2020), no.~1, 1-26.

\bibitem{kr70}
{\sc H.-O.~Kreiss},
Initial boundary value problems for hyperbolic systems,
{\em Comm. Pure Appl. Math.} {\bf 23} (1970), 277- 298.

\bibitem{las-lions-trig_1986}
{\sc I.~Lasiecka, J.-L.~Lions, R.~Triggiani},
Nonhomogeneous boundary value problems for second order hyperbolic operators,
{\em J.~Math. Pures Appl.} {\bf 65} (1986), no.~2, 149-192.

\bibitem{las-trig-hyperbolic_1983}
{\sc I.~Lasiecka, R.~Triggiani},
Regularity of hyperbolic equations under $L^2(0,T;L^2(\Gamma))$-Dirichlet boundary terms,
{\em Appl.~Math.~Optim.} {\bf 10} (1983), no.~3, 275-286. 

\bibitem{las-trig-redbooks}
{\sc I.~Lasiecka, R.~Triggiani},
{\em Control theory for partial differential equations: continuous and approximation theories,
I. Abstract parabolic systems; II. Abstract hyperbolic-like systems over a finite time horizon}, 
Encyclopedia of Mathematics and its Applications, 74-75,
Cambridge University Press, Cambridge, 2000.

\bibitem{las-wang-part1_2016}
{\sc I.~Lasiecka, X.~Wang},
Moore-Gibson-Thompson equation with memory, part I: exponential decay of energy,
{\em Z.~Angew.~Math.~Phys.} {\bf 67} (2016), no.~2, Art.~17, 23 pp. 

\bibitem{las-wang-part2_2015}
{\sc I.~Lasiecka, X.~Wang},
Moore-Gibson-Thompson equation with memory, part II: General decay of energy,
{\em J. Differential Equations} {\bf 259} (2015), no.~12, 7610-7635. 

\bibitem{lecaros-etal_2020}
{\sc R.~Lecaros, A.~Mercado, S.~Zamorano},
An inverse problem for Moore-Gibson-Thompson equation arising in high intensity
ultrasound, 
{\sl e-Print} arXiv:2001.07673 [math.AP] (2020).
  	
\bibitem{lions-book_1983}
{\sc J.-L.~Lions},
{\em Contr\^{o}le des syst\`emes distribu\'es singuliers} (French)
[Control of singular distributed systems],
M\'ethodes Math\'ematiques de l'Informatique [Mathematical Methods of Information Science], 
13, Gauthier-Villars, Montrouge, 1983. xxiii+448 pp.

\bibitem{lions-mag_72}
{\sc J.-L.~Lions, E.~Magenes},
{\em Non-Homogeneous Boundary Value Problems and Applications, Vols.~I and II},
Springer Verlag, 1972.

\bibitem{liu-etal_2019}
{\sc W.~Liu, Z.~Chen, D.~Chen}
New general decay results for a Moore-Gibson-Thompson equation with memory, 
{\em  Appl. Anal.} {\bf 99} (2020), no.~15, 2622-2640.

\bibitem{liu-trig_2013}
{\sc S.~Liu, R.~Triggiani},
An inverse problem for a third order PDE arising in high-intensity ultrasound: global
uniqueness and stability by one boundary measurement,
{\em J.~Inverse Ill-Posed Probl.} {\bf 21} (2013), no.~6, 825-869. 

\bibitem{liu-trig_2014}
{\sc S.~Liu, R.~Triggiani},
Inverse problem for a linearized Jordan-Moore-Gibson-Thompson equation,
{\em New prospects in direct, inverse and control problems for evolution equations}, 
305-351, Springer INdAM Ser., 10, Springer, Cham, 2014.

\bibitem{lizama-etal_2019}
{\sc C.~Lizama, S. Zamorano},
Controllability results for the Moore-Gibson-Thompson equation arising in nonlinear acoustics,
{\em J. Differential Equations} {\bf 266} (2019), n.~12, 7813-7843.

\bibitem{marchand-etal_2012} 
{\sc R.~Marchand, T.~McDevitt, R.~Triggiani}, 
An abstract semigroup approach to the third-order Moore-Gibson-Thompson partial differential equation
arising in high-intensity ultrasound: structural decomposition, spectral analysis, exponential stability, 
{\em Math. Methods Appl. Sci.} {\bf 35} (2012), no.~15, 1896-1929.

\bibitem{nikolic-kalten_2017}
{\sc V.~Nikolic, B.~Kaltenbacher},
Sensitivity analysis for shape optimization of a focusing acoustic lens in lithotripsy,
{\em Appl. Math. Optim.} {\bf 76} (2017), no.~2, 261-301. 

\bibitem{pellicer-said_2019}
{\sc M.~Pellicer, B.~Said Houari},
Wellposedness and decay rates for the Cauchy problem of the Moore-Gibson-Thompson equation arising in high intensity ultrasound,
{\em Appl. Math. Optimiz.} {\bf 80} (2019), no.~2, 447-478.  

\bibitem{pellicer-sola_2019}
{\sc M.~Pellicer, J. Sol\'a-Morales},
Optimal scalar products in the Moore-Gibson-Thompson equation,
{\em Evol. Equ. Control Theory} {\bf 8} (2019), n.~1, 203-220.

\bibitem{racke-said_2019}
{\sc R.~Racke, B.~Said-Houari},
Global well-posedness of the Cauchy problem for the 3D Jordan-Moore-Gibson-Thompson equation, 
{\em Commun. Contemp. Math.},
%
{\tt https://doi.org/10.1142/S0219199720500698}  


\bibitem{rm74}
{\sc J.~Rauch, F.~Massey},
Differentiability of solutions to hyperbolic initial-boundary value problems, 
{\em Trans. Amer. Math. Soc.} {\bf 189} (1974), 303-318.

\bibitem{rudenko-soluyan-nla_1977}
{\sc O.V.~Rudenko, S.I.~Soluyan},
{\em Theoretical Foundations of Nonlinear Acoustics}, (Plenum, New York, 1977),
Translated from the Russian by Robert T. Beyer. 
Studies in Soviet Science. Consultants Bureau, New York-London, 1977. vii+274 pp.

\bibitem{sakamoto_70}
{\sc R.~Sakamoto},
Mixed problems for hyperbolic equations, I and II,
{\em J.~Math. Kyoto Univ.} {\bf 10} (1970), 349-373, 403-417.
 
\bibitem{sakamoto82}
{\sc R.~Sakamoto},
{\em Hyperbolic boundary value problems}, 
Translated from the Japanese by Katsumi Miyahara,
Cambridge University Press, Cambridge-New York, 1982. ix+210 pp.

\bibitem{sova_1966}
{\sc M.~Sova},
Cosine operator functions,
{\em Rozprawy Mat.} {\bf 49} (1966), 47~pp. 

\bibitem{stokes_1984}
{\sc (Professor) Stokes},
An examination of the possible effect of the radiation of heat on the propagation of sound,
{\em Philos. Magazine Series 4} {\bf 1} (1851), no.~4, 305-317.
%
{\tt https://www.tandfonline.com/doi/abs/10.1080/14786445108646736}

\bibitem{trig_2020} 
{\sc R.~Triggiani}, 
Sharp Interior and Boundary Regularity of the SMGTJ-equation with Dirichlet or Neumann 
boundary control,
in: {\em Semigroups of Operators -- Theory and Applications},
Banasiak, J., Bobrowski, A., Lachowicz, M., Tomilov, Y. (Eds.),
Springer Proceedings in Mathematics \& Statistics, vol.~325,
Springer, Cham. (2020). 
{\tt https://doi.org/10.1007/978-3-030-46079-2}

\end{thebibliography}

\end{document}